        \definecolor{johns}{HTML}{CBC3E3}
        \definecolor{beths}{HTML}{B7FFFA}
        \definecolor{seans}{HTML}{98FB98}
\newtheorem{theorem}{Theorem}[section]
\newtheorem{lemma}[theorem]{Lemma}
\newtheorem{observation}[theorem]{Observation}
\newtheorem{conjecture}[theorem]{Conjecture}
\newtheorem{proposition}[theorem]{Proposition}
\newtheorem{construction}[]{Construction}
\newtheorem{claim}[theorem]{Claim}
\newtheorem{question}[theorem]{Question}
\providecommand{\customgenericname}{}
\newcommand{\newcustomtheorem}[2]{%
  \newenvironment{#1}[1]
  {%
   \ifdefined\crefalias\crefalias{innercustomgeneric}{#2}\fi
   \renewcommand\customgenericname{#2}%
   \renewcommand\theinnercustomgeneric{##1}%
   \innercustomgeneric
  }
  {\endinnercustomgeneric}%
  \ifdefined\crefname\crefname{#2}{#2}{#2s}\fi
}
\newcommand{\glue}[3]{{#1}\boxminus_{{#2}}{#3}}
\DeclareMathOperator*{\argmax}{arg\,max}
\tikzstyle{Black Node}=[
\tikzstyle{Small Black Node}=[
\tikzstyle{Blue Node}=[
\tikzstyle{Small Blue Node}=[
\tikzstyle{Red Node}=[
\tikzstyle{Small Red Node}=[
\tikzstyle{Green Node}=[
\tikzstyle{Small Green Node}=[
\tikzstyle{Text Node}=[
\tikzstyle{Black Edge}=[draw=black, -]
\tikzstyle{Black Alpha Edge}=[draw=black, opacity=0.5, -]
\tikzstyle{Dashed Edge}=[draw=black, dashed, opacity=0.5]
\title{Cost-Benefit Analysis for PMU Placement in Power Grids}
\author{Beth Bjorkman\footnote{Air Force Research Laboratory} \and Sean English\footnote{University of North Carolina Wilmington, \texttt{EnglishS@uncw.edu}} \and Johnathan Koch\footnote{Applied Research Solutions}}
\date{27 Jan 2026}
\pgfplotsset{compat=1.18}
\begin{document}

\maketitle

\begin{abstract}
    Power domination is a graph-theoretic model for the observance of a power grid using phasor measurement units (PMUs).
    There are many costs associated with the installation of a PMU, but also costs associated with not observing the entire power grid.
    In this work, we propose and study a power domination cost function, which balances these two costs.
    Given a graph $G$, a set of sensor locations $S$, and a parameter $\beta$ (which is the ratio of the cost of a PMU to the cost of non-observance of any given vertex), we define the cost function
    \[ \mathrm{C}(G;S,\beta)=|S|+\beta\cdot (|V(G)|-|\mathrm{Obs}(G;S)|) \]
    where $|\mathrm{Obs}(G;S)|$ is the number of vertices observed by sensors placed at $S\subseteq V(G)$ in the power domination process.
    
    We explore the values of $k$ for which there is a set $S$ of size $k$ that minimizes this cost function, and explore which values of $\beta$ guarantee that it is optimal to observe the entire power grid to minimize cost.
    We also introduce notions of marginal cost and marginal observance, providing tools to analyze how many PMUs one should install on a given power grid.
\end{abstract}

\section{Introduction}
    The power domination process is a graph theoretic propagation process designed to model observation of a power grid using \emph{phasor measurement units} (PMUs)~\cite{HHHH2002}.
    A PMU placed at a vertex in a graph $G$ will \textbf{observe} the vertex it is placed at and all vertices connected to it.
    After this initial round of observance, Kirchoff's laws can be used to observe more of the graph; if an observed vertex has exactly one unobserved neighbor, then this neighbor becomes observed. 
    
    The most well-studied question in power domination is that of determining the \textbf{power domination number}.
    Given a graph $G$, the power domination number, $\gamma_P(G)$, is the least number of sensors necessary to observe all vertices of $G$.
    In reality, installing PMUs can be cost-prohibitive.
    For example, in 2021 the Independent Electricity System Operator (IESO) in Ontario, Canada estimated that the installation cost of a PMU is between \$50,000 and \$300,000 CAD, depending on many factors\cite{IESO2021a}.
    However, the high cost of a PMU is offset by the information they provide.
    For example, the IESO cites benefits of PMU observance such as reliability and power grid resiliency, efficiency and cost-savings, and environmental benefits\cite{IESO2021b}.

    Other works have proposed models that optimize for redundancy \cite{PCW2010, BCF2023} and information propagation time \cite{FHK2016, BCFHS2019}.
    We propose a model that weighs the cost of a PMU against the benefits of higher graph observance, to determine PMU placements which may not observe the entire graph, but instead optimize the costs and benefits.
    
    \subsection{The power domination process and cost function}
        Formally, given a graph $G$ and a set of PMU sensor placements $S\subseteq V(G)$, the power domination process is defined recursively in two steps as follows:
        \begin{enumerate}
        	\item
                \emph{(Domination Step)} Given the initial placement $S$, let $B:=N[S]$.
        	\item
                \emph{(Zero Forcing Step)} If there exists a vertex $x\in B$ with a single neighbor $y\in V(G)\setminus B$, then add $y$ to $B$.
                In this manner, the vertex $x$ forces $y$.
                Repeat this step until no such vertex $x\in B$ with a single neighbor $y\in V(G)\setminus B$ exists.
        \end{enumerate}
        
        We call the final set $B$ after performing the power domination process, starting from initial sensor placement $S$, the \emph{observed set} and denote it with $\mathrm{Obs}(G;S)$.
        If $\mathrm{Obs}(G;S)=V(G)$, then $S$ a \emph{power dominating set}.
        The size of a minimum power dominating set is the \emph{power domination number} and is denoted by $\gamma_P(G)$.
        
        We make two simplifying assumptions; 1. We assume that the cost of placing at PMU at any vertex in $G$ is the same, and 2. We assume that the cost associated with not observing a vertex is the same for every vertex in $V(G)$.
        We further normalize our total cost to be in terms of the price of a single PMU.
        With this in mind, we define the \emph{Observance Cost Ratio} $\beta$, which is the ratio of the cost of not observing a vertex to the cost of a PMU.
        This leads us to our \emph{cost function} for the graph $G$ with initial sensor placement $S$ and Observance Cost Ratio $\beta$:
        \[
            \mathrm{C}(G;S,\beta)=|S|+\beta\cdot (|V(G)|-|\mathrm{Obs}(G;S)|).
        \]
        
        We do not attempt to estimate $\beta$ in this work.
        Instead, we treat $\beta$ as an unknown parameter, and attempt to minimize the cost function over all sensor placements in a given graph $G$, over the entire range of possible choices for $\beta\in \mathbb{R}_{\geq 0}$.
        In this way, $\mathrm{C}(G;S,\beta)$ is a linear function in $\beta$.
        For a fixed $\beta$, we will call a set $S\subseteq V(G)$ \emph{$\beta$-best} if 
        	\[
        	\mathrm{C}(G;S,\beta)=\min_{S'\subseteq  V(G)} \mathrm{C}(G;S',\beta).
        	\]
        That is, if we know the true observance cost ratio is $\beta$, then a $\beta$-best set is an optimal sensor placement that minimizes the total cost of PMUs and not observing portions of the graph.
        It is worth noting that for a given $\beta$, there may be more than one $\beta$-best set.
        In general, $\beta$-best sets rely heavily on the structure of the graph $G$, but $\beta$-best sets for extreme values of $\beta$ are easily characterized.
        
        \begin{observation}\label{observation beta small or large}
        	If $\beta\leq \frac{1}{|V(G)|}$, then the empty set (i.e. using no sensors) is $\beta$-best, while on the other extreme, if $\beta\geq 1$, then any minimum power dominating set is $\beta$-best.
        \end{observation}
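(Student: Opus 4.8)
The plan is to prove both halves by directly comparing the cost of the proposed $\beta$-best set against that of an arbitrary competitor $S'\subseteq V(G)$, exploiting the fact that once the common $\beta\cdot|V(G)|$ terms cancel, the inequality $\mathrm{C}(G;S,\beta)\leq \mathrm{C}(G;S',\beta)$ collapses into a transparent statement. Throughout, I would write $n=|V(G)|$ and $k=|S'|$, and I would only ever need to beat a \emph{nonempty} (resp.\ non-power-dominating) competitor, since the boundary cases are trivial.

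For the small-$\beta$ claim, I would first compute the cost of the empty set: placing no sensors observes nothing, so $|\mathrm{Obs}(G;\emptyset)|=0$ and $\mathrm{C}(G;\emptyset,\beta)=\beta n$. For any competitor $S'$ with $|S'|=k\geq 1$, subtracting the two costs and cancelling the shared $\beta n$ reduces the desired inequality $\mathrm{C}(G;\emptyset,\beta)\leq \mathrm{C}(G;S',\beta)$ to $\beta\cdot|\mathrm{Obs}(G;S')|\leq k$. I would then bound the left-hand side crudely, using $|\mathrm{Obs}(G;S')|\leq n$ together with the hypothesis $\beta\leq 1/n$ to get $\beta\cdot|\mathrm{Obs}(G;S')|\leq 1\leq k$. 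This closes the first half with essentially no structural input about $G$.

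For the large-$\beta$ claim, let $S^*$ be a minimum power dominating set, so that $\mathrm{C}(G;S^*,\beta)=\gamma_P(G)$ because $S^*$ observes all of $V(G)$. Against a competitor $S'$ that leaves $m:=n-|\mathrm{Obs}(G;S')|$ vertices unobserved, the goal becomes $\gamma_P(G)\leq k+\beta m$. The key step, and what I expect to be the main obstacle, is a short extension argument: I would augment $S'$ by placing a PMU at each of the $m$ currently-unobserved vertices, producing a set $S''$ of size $k+m$. The point is that the \emph{domination step} alone on $S''$ already observes every vertex (each previously-unobserved vertex now lies in the set itself, and the rest were observed before), so $S''$ is a power dominating set and hence $\gamma_P(G)\leq k+m$. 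Finally, invoking $\beta\geq 1$ yields $k+\beta m\geq k+m\geq \gamma_P(G)$, which is exactly what is needed. The only part warranting care is verifying that this naive augmentation genuinely power dominates $G$, and I would state it cleanly as the crux before chaining the inequalities together.
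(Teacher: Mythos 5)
The paper states this observation without proof, so there is no official argument to compare against; your write-up is the natural fleshing-out, and your first half is airtight: cancelling the shared $\beta n$ and using $|\mathrm{Obs}(G;S')|\leq n$ with $\beta\leq 1/n$ gives $\beta\cdot|\mathrm{Obs}(G;S')|\leq 1\leq |S'|$ for any nonempty competitor.

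The second half reaches the right conclusion, but the justification of your self-declared crux is incorrect as stated. You claim the \emph{domination step alone} on $S''=S'\cup\big(V(G)\setminus\mathrm{Obs}(G;S')\big)$ observes every vertex, because the previously unobserved vertices lie in $S''$ and ``the rest were observed before.'' But ``observed before'' means observed in the $S'$-process \emph{including its zero forcing steps}; such vertices need not lie in $N[S'']$. Concretely, take the path $1\text{--}2\text{--}3\text{--}4\text{--}5\text{--}6$ with a triangle on $\{6,a,b\}$ attached, and $S'=\{1\}$: then $\mathrm{Obs}(G;S')=\{1,\dots,6\}$, the unobserved set is $\{a,b\}$, and $N[S'']=N[\{1,a,b\}]=\{1,2,6,a,b\}$ omits $3,4,5$, so the domination step alone does not finish. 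The repair is the standard monotonicity of the forcing process: if $B\subseteq B'$, then the zero forcing closure of $B$ is contained in that of $B'$, since any force $x\to y$ performable in the process started from $B$ is, at the corresponding stage of the process started from $B'$, either still performable or unnecessary because $y$ is already observed. Applying this with $B=N[S']\subseteq N[S'']=B'$ gives $\mathrm{Obs}(G;S'')\supseteq \mathrm{Obs}(G;S')$, and combining with $V(G)\setminus\mathrm{Obs}(G;S')\subseteq S''\subseteq \mathrm{Obs}(G;S'')$ yields $\mathrm{Obs}(G;S'')=V(G)$. Hence $\gamma_P(G)\leq |S'|+m$, and your chain $k+\beta m\geq k+m\geq \gamma_P(G)$ goes through for $\beta\geq 1$.
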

        
        If we have two sets $S,S'\subseteq V(G)$, with $|S|=|S'|$, then whichever of the two sets observes more will have a lower total cost under our model.
        As such, for a fixed $k\in \mathbb{N}$, we define the \emph{maximum observance} for the graph $G$ and integer $k$ as 
        \[
            \mathrm{maxObs}(G;k):=\max_{S\in \binom{V(G)}{k}}|\mathrm{Obs}(G;S)|.
        \]
        The power domination number $\gamma_P(G)$ is the smallest $k$ such that $\mathrm{maxObs}(G;k)=|V(G)|$. It is worth noting that $\mathrm{maxObs}$ is strictly increasing for $0\leq k\leq \gamma_P(G)$, as formalized below.
        
        \begin{observation}\label{observation more sensors means more observed}
        	For a graph $G$ and $i,j\in [\gamma_P(G)]\cup \{0\}$ with $i<j$, it holds that $\mathrm{maxObs}(G;i)<\mathrm{maxObs}(G;j)$.
        \end{observation}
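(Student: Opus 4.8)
The plan is to prove the strict monotonicity
\[
\mathrm{maxObs}(G;i)<\mathrm{maxObs}(G;j)
\]
for $i<j$ in the range $[\gamma_P(G)]\cup\{0\}$. By transitivity it suffices to establish the single-step inequality $\mathrm{maxObs}(G;k)<\mathrm{maxObs}(G;k+1)$ whenever $k+1\leq \gamma_P(G)$, since chaining these strict inequalities across $i,i+1,\dots,j$ yields the general case. First I would fix such a $k$ and let $S$ be an optimal set of size $k$ achieving $\mathrm{maxObs}(G;k)=|\mathrm{Obs}(G;S)|$. The goal is to exhibit a set $S'$ of size $k+1$ with $|\mathrm{Obs}(G;S')|>|\mathrm{Obs}(G;S)|$, which immediately gives $\mathrm{maxObs}(G;k+1)\geq |\mathrm{Obs}(G;S')|>\mathrm{maxObs}(G;k)$.

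The key observation is that since $k<\gamma_P(G)$, the set $S$ is \emph{not} a power dominating set, so $\mathrm{Obs}(G;S)\neq V(G)$ and there exists at least one unobserved vertex $u\in V(G)\setminus\mathrm{Obs}(G;S)$. The natural candidate is $S'=S\cup\{u\}$, which has size $k+1$ (as $u\notin\mathrm{Obs}(G;S)\supseteq S$). The main step is then to argue monotonicity of the observed set under adding an initial sensor: I would show $\mathrm{Obs}(G;S)\subseteq\mathrm{Obs}(G;S')$, and moreover that $u\in\mathrm{Obs}(G;S')$ while $u\notin\mathrm{Obs}(G;S)$, so the containment is strict and the count strictly increases.

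The crux — and the step I expect to require the most care — is the monotonicity claim $\mathrm{Obs}(G;S)\subseteq\mathrm{Obs}(G;S\cup\{u\})$. This is intuitively clear (more sensors can only help) but deserves justification because the zero-forcing step is order-dependent and defined by a stopping condition. The clean way to see it is that the power domination process is monotone in its initial set: if we run the process from $S\cup\{u\}$, we may first replay exactly the forcing sequence that the process from $S$ used, since every force valid for the smaller starting set $N[S]$ remains valid for the superset $N[S\cup\{u\}]$ (a vertex with a unique unobserved neighbor relative to a larger observed set has at most as many unobserved neighbors). Hence every vertex of $\mathrm{Obs}(G;S)$ is eventually observed from $S\cup\{u\}$ as well, giving the containment. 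Since $u\in N[u]\subseteq N[S\cup\{u\}]\subseteq\mathrm{Obs}(G;S\cup\{u\})$ but $u\notin\mathrm{Obs}(G;S)$, the inclusion is proper, completing the single-step argument. One should note the well-definedness of $\mathrm{Obs}(G;S)$ (independence of the final observed set from the order of forces) is the standard closure property of zero forcing, which I would cite or invoke rather than reprove.
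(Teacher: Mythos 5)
Your proof is correct. The paper states this as an Observation with no proof at all, so there is nothing to compare against; your argument — reduce to the single-step case, add an unobserved vertex $u$ to an optimal set $S$ of size $k$, and invoke monotonicity of the forcing closure (replaying the force sequence of $N[S]$ inside the process started from $N[S\cup\{u\}]$) — is exactly the standard justification the authors leave implicit, and every step of it, including the care taken with the order-dependence of forces and the strictness via $u\in\mathrm{Obs}(G;S\cup\{u\})\setminus\mathrm{Obs}(G;S)$, is sound.
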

        
        When optimizing cost over all sensor placements of a fixed size $k$ on the graph $G$, we will sometimes write
        \[
        \mathrm{C}(G;k,\beta):=k+\beta(|V(G)|-\mathrm{maxObs}(G;k)).
        \]
        
        It is straightforward to see that if a set $S$ is $\beta$-best for some $\beta$, then $|\mathrm{Obs}(G;S)|=\mathrm{maxObs}(G;|S|)$.
        However, a set being $\beta$-best for a particular $\beta$ is strictly stronger than it maximizing the observance for a fixed size: $\mathrm{C}(G;S,\beta) \leq \mathrm{C}(G;S',\beta)$ for any $S'\subseteq V(G)$.
        We introduce the \emph{minimum cost function} for $G$:
        \[
        \mathrm{minC}(G;\beta):=\min_{S\subseteq V(G)}\mathrm{C}(G;S,\beta).
        \]
        Similar to the cost function, we will usually think of $\mathrm{minC}$ as a function of $\beta$, considering the graph $G$ to be fixed.
        By Observation~\ref{observation beta small or large}, when $\beta=0$, we have that $\mathrm{minC}(G;0)=0$, and if $\beta\geq 1$, then $\mathrm{minC}(G;1)=\gamma_P(G)$ and so we will focus on the range $\beta\in (0,1)$.
        In this range, $\mathrm{minC}$ will be piecewise linear, representing $\beta$-best sets for sub-intervals within $(0,1)$.
        
        It may be the case that a set is $\beta$-best only for a single value, say $\beta^*$.
        In these cases, there is always another set that is $\beta$-best on a non-degenerate interval $I\subseteq (0,1)$ with $\beta^*\in I$.
        As such, we will call a set $S\subseteq V(G)$ \emph{useful} for $G$ if there exists a non-degenerate interval $I$ such that the set $S$ is $\beta$-best for all $\beta\in I$.
        We will call some size $k\in [\gamma_P(G)]\cup \{0\}$ \emph{useful} if there exists a useful set of size $k$.

    \subsection{Example of useful sizes and \texorpdfstring{$\beta$}{beta}-best sets}
        We now present an example cost-benefit analysis on a graph used in optimal power flow simulations.
        The graph pictured in Figure~\ref{fig:60nodesystem} stems from the Nordic32 test system initially developed by CIGRE Task Force in 1995 \cite{S1995} and represents a potential power grid for a Nordic country \cite{C2020}.
        It contains $60$ nodes and has power domination number $11$.
        In Table~\ref{tab:betabestsets}, the maximum observance and cost functions for this graph are calculated for each size $k$ from $0$ to $11$, along with a representative set that realizes the maximum observance.
        The sizes $k=0,1,4,6,9$ and $11$ are all useful, whereas $2,3,5,7,8$ and $10$ are not.
        Figure~\ref{figure plot of cost functions for 60 node system} plots the $12$ different minimal cost functions for each size, showing how the minimum cost sets evolve as $\beta$ varies.

        \begin{figure}
        	\centering
            \resizebox{.5\textwidth}{!}{
                \begin{tikzpicture}
                    \node [style=Black Node] (0) at (6, 9) {$0$};
                    \node [style=Black Node] (1) at (4, 9) {$1$};
                    \node [style=Black Node] (2) at (5, 9) {$2$};
                    \node [style=Black Node] (3) at (4, 11) {$3$};
                    \node [style=Black Node] (4) at (1, 11) {$4$};
                    \node [style=Black Node] (5) at (1, 2) {$5$};
                    \node [style=Black Node] (6) at (1, 1) {$6$};
                    \node [style=Black Node] (7) at (4, 7) {$7$};
                    \node [style=Black Node] (8) at (5, 6) {$8$};
                    \node [style=Black Node] (9) at (9, 8) {$9$};
                    \node [style=Black Node] (10) at (7, 5) {$10$};
                    \node [style=Black Node] (11) at (0, 11) {$11$};
                    \node [style=Black Node] (12) at (2, 7) {$12$};
                    \node [style=Black Node] (13) at (7, 3) {$13$};
                    \node [style=Black Node] (14) at (7, 0) {$14$};
                    \node [style=Black Node] (15) at (6, 0) {$15$};
                    \node [style=Black Node] (16) at (4, 1) {$16$};
                    \node [style=Black Node] (17) at (3, 2) {$17$};
                    \node [style=Black Node] (18) at (1, 6) {$18$};
                    \node [style=Black Node] (19) at (1, 4) {$19$};
                    \node [style=Black Node] (20) at (0, 4) {$20$};
                    \node [style=Black Node] (21) at (9, 5) {$21$};
                    \node [style=Black Node] (22) at (8, 7) {$22$};
                    \node [style=Black Node] (23) at (7, 9) {$23$};
                    \node [style=Black Node] (24) at (5, 8) {$24$};
                    \node [style=Black Node] (25) at (2, 1) {$25$};
                    \node [style=Black Node] (26) at (2, 9) {$26$};
                    \node [style=Black Node] (27) at (3, 8) {$27$};
                    \node [style=Black Node] (28) at (9, 7) {$28$};
                    \node [style=Blue Node] (29) at (6, 3) {$29$};
                    \node [style=Blue Node] (30) at (6, 1) {$30$};
                    \node [style=Black Node] (31) at (0, 7) {$31$};
                    \node [style=Black Node] (32) at (0, 6) {$32$};
                    \node [style=Blue Node] (33) at (0, 5) {$33$};
                    \node [style=Blue Node] (34) at (6, 8) {$34$};
                    \node [style=Black Node] (35) at (3, 11) {$35$};
                    \node [style=Blue Node] (36) at (6, 6) {$36$};
                    \node [style=Black Node] (37) at (6, 5) {$37$};
                    \node [style=Black Node] (38) at (2, 10) {$38$};
                    \node [style=Black Node] (39) at (5, 5) {$39$};
                    \node [style=Black Node] (40) at (7, 4) {$40$};
                    \node [style=Blue Node] (41) at (0, 10) {$41$};
                    \node [style=Black Node] (42) at (0, 8) {$42$};
                    \node [style=Black Node] (43) at (1, 8) {$43$};
                    \node [style=Black Node] (44) at (6, 4) {$44$};
                    \node [style=Black Node] (45) at (5, 4) {$45$};
                    \node [style=Black Node] (46) at (5, 3) {$46$};
                    \node [style=Black Node] (47) at (3, 3) {$47$};
                    \node [style=Black Node] (48) at (6, 7) {$48$};
                    \node [style=Blue Node] (49) at (2, 11) {$49$};
                    \node [style=Black Node] (50) at (0, 3) {$50$};
                    \node [style=Blue Node] (51) at (0, 0) {$51$};
                    \node [style=Black Node] (52) at (5, 0) {$52$};
                    \node [style=Black Node] (53) at (2, 3) {$53$};
                    \node [style=Blue Node] (54) at (10, 6) {$54$};
                    \node [style=Blue Node] (55) at (1, 7) {$55$};
                    \node [style=Black Node] (56) at (6, 2) {$56$};
                    \node [style=Black Node] (57) at (7, 1) {$57$};
                    \node [style=Blue Node] (58) at (4, 2) {$58$};
                    \node [style=Black Node] (59) at (10, 10) {$59$};
                    \draw [style=Black Edge] (0) to (23);
                    \draw [style=Black Edge] (1) to (24);
                    \draw [style=Black Edge] (2) to (34);
                    \draw [style=Black Edge] (3) to (35);
                    \draw [style=Black Edge] (4) to (49);
                    \draw [style=Black Edge] (5) to (25);
                    \draw [style=Black Edge] (6) to (51);
                    \draw [style=Black Edge] (7) to (27);
                    \draw [style=Black Edge] (8) to (36);
                    \draw [style=Black Edge] (9) to (59);
                    \draw [style=Black Edge] (10) to (37);
                    \draw [style=Black Edge] (11) to (41);
                    \draw [style=Black Edge] (12) to (55);
                    \draw [style=Black Edge] (13) to (29);
                    \draw [style=Black Edge] (14) to (30);
                    \draw [style=Black Edge] (15) to (30);
                    \draw [style=Black Edge] (16) to (58);
                    \draw [style=Black Edge] (17) to (58);
                    \draw [style=Black Edge] (18) to (32);
                    \draw [style=Black Edge] (19) to (33);
                    \draw [style=Black Edge] (20) to (33);
                    \draw [style=Black Edge] (21) to (54);
                    \draw [style=Black Edge] (22) to (28);
                    \draw [style=Black Edge] (23) to (34);
                    \draw [style=Black Edge] (23) to (59);
                    \draw [style=Black Edge] (24) to (34);
                    \draw [style=Black Edge] (24) to (48);
                    \draw [style=Black Edge] (25) to (52);
                    \draw [style=Black Edge] (25) to (53);
                    \draw [style=Black Edge] (26) to (27);
                    \draw [style=Black Edge] (26) to (41);
                    \draw [style=Black Edge] (28) to (54);
                    \draw [style=Black Edge] (29) to (40);
                    \draw [style=Black Edge] (29) to (44);
                    \draw [style=Black Edge] (29) to (46);
                    \draw [style=Black Edge] (29) to (56);
                    \draw [style=Black Edge] (30) to (56);
                    \draw [style=Black Edge] (30) to (57);
                    \draw [style=Black Edge] (31) to (32);
                    \draw [style=Black Edge] (31) to (55);
                    \draw [style=Black Edge] (32) to (33);
                    \draw [style=Black Edge] (32) to (47);
                    \draw [style=Black Edge] (35) to (49);
                    \draw [style=Black Edge] (36) to (37);
                    \draw [style=Black Edge] (36) to (38);
                    \draw [style=Black Edge] (36) to (48);
                    \draw [style=Black Edge] (36) to (54);
                    \draw [style=Black Edge] (36) to (59);
                    \draw [style=Black Edge] (37) to (39);
                    \draw [style=Black Edge] (37) to (40);
                    \draw [style=Black Edge] (38) to (41);
                    \draw [style=Black Edge] (38) to (49);
                    \draw [style=Black Edge] (38) to (59);
                    \draw [style=Black Edge] (39) to (41);
                    \draw [style=Black Edge] (39) to (44);
                    \draw [style=Black Edge] (39) to (45);
                    \draw [style=Black Edge] (41) to (42);
                    \draw [style=Black Edge] (41) to (43);
                    \draw [style=Black Edge] (42) to (55);
                    \draw [style=Black Edge] (43) to (55);
                    \draw [style=Black Edge] (45) to (46);
                    \draw [style=Black Edge] (46) to (47);
                    \draw [style=Black Edge] (46) to (52);
                    \draw [style=Black Edge] (46) to (55);
                    \draw [style=Black Edge] (46) to (56);
                    \draw [style=Black Edge] (47) to (53);
                    \draw [style=Black Edge] (47) to (58);
                    \draw [style=Black Edge] (50) to (51);
                    \draw [style=Black Edge] (50) to (53);
                    \draw [style=Black Edge] (51) to (52);
                    \draw [style=Black Edge] (54) to (59);
                    \draw [style=Black Edge] (56) to (57);
                \end{tikzpicture}
            }
        	\caption{The power grid described in \cite{C2020}, which we will call $G$. This graph has 60 vertices and has power domination number equal to 11. The dark blue vertices constitute a minimum power dominating set. Table~\ref{tab:betabestsets} shows the maximum observance for each size from $0$ to $11$, along with the relevant cost functions and intervals on which the given size is useful.}
        	\label{fig:60nodesystem}
        \end{figure}
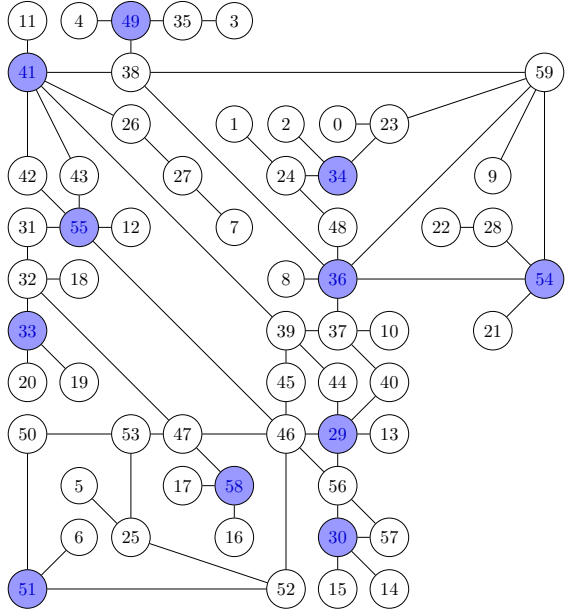
        
        \begin{table}
        	\centering
        	\begin{tabular}{|l|l|l|l|l|}
        		\hline
        		$k$&$\mathrm{maxObs}(G;k)$&Set of size $k$ achieving max observance&$\mathrm{C}(G;k,\beta)$&Useful Interval\\ 
        		\hline
        		0&0&$\emptyset$&$60\beta$& $\left[0,\frac{1}{10}\right]$\\ 
        		\hline
        		1&10&$\{41\}$&$1 + 50\beta$     & $\left[\frac{1}{10},\frac{3}{25}\right]$\\ 
        		\hline
        		2&18&$\{29, 41\}$&$2 + 42\beta$     & N.A. \\ 
        		\hline
        		3&26&$\{25, 29, 41\}$&$3 + 34\beta$     & N.A.\\ 
        		\hline
        		4&35&$\{30, 36, 51, 55\}$&$4 + 25\beta$     & $\left[\frac{3}{25},\frac{1}{6}\right]$\\ 
        		\hline
        		5&41&$\{0, 30, 36, 51, 55\}$&$5 + 19\beta$     & N.A.\\ 
        		\hline
        		6&47&$\{0, 25, 30, 32, 36, 41\}$&$6 + 13\beta$     & $\left[\frac{1}{6},\frac{1}{3}\right]$ \\
                \hline
        		7&50&$\{0, 3, 25, 30, 32, 36, 41\}$&$7 + 10\beta$     &N.A.\\ 
        		\hline
        		8&53&$\{0, 1, 5, 14, 29, 32, 41, 54\}$&$8 + 7\beta$      &N.A.\\ 
        		\hline
        		9&56&$\{23, 25, 29, 30, 32, 36, 41, 49, 54\}$&$9 + 4\beta$& $\left[\frac{1}{3},\frac{1}{2}\right]$\\ 
        		\hline
        		10&58&$\{29, 30, 33, 34, 36, 41, 49, 51, 54, 55\}$& $10 + 2\beta$&N.A.\\ 
        		\hline
        		11&60&$\{29, 30, 33, 34, 36, 41, 49, 51, 54, 55, 58\}$ & $11$& $\left[\frac{1}{2},\infty\right)$\\ 
        		\hline
        	\end{tabular}
        	\caption{Maximum observance, a representative set, the cost function, and the useful interval for each size $k$ from $0$ to $11$ for the graph in Figure~\ref{fig:60nodesystem}.}
        	\label{tab:betabestsets}
        \end{table}
        
        One can theoretically use the information in Table~\ref{tab:betabestsets} to make informed decisions about power grid observance.
        For example, one would never want to use exactly $5$ PMUs to monitor this power grid: the extra vertices observed as compared to using $4$ PMUs is not outweighed by the additional cost until it's optimal to use $6$ PMUs.
        Another example would be if we had an estimate for $\beta$.
        Say, perhaps, we believe $\beta\approx 0.25$ (i.e. the cost of a PMU is approximately $4$ times the cost of not observing any given vertex in the graph).
        In this case, our analysis suggests that using exactly $6$ sensors will minimize the total cost of PMUs and the costs associated with not observing the entire power grid.
        
        \begin{figure}
        	\centering
            \includegraphics{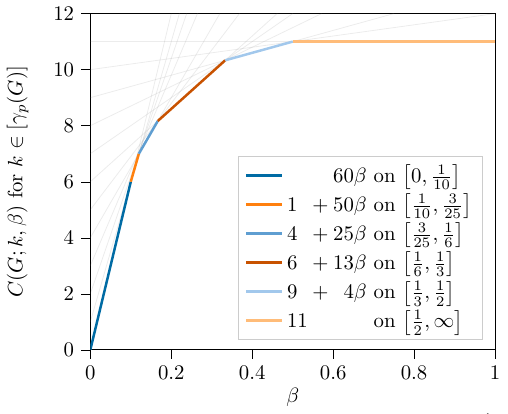}
            \caption{A plot of the cost functions from Table~\ref{tab:betabestsets}. Segments of functions on intervals where they are $\beta$-best are indicated, and all other segments and functions are displayed in gray.}
            \label{figure plot of cost functions for 60 node system}
        \end{figure}
    
    \subsection{Main results}
        Here we summarize the main results of our paper.
        Our first main theorem shows that essentially any collection of sizes can be the set of all useful sizes for a graph, the only condition being that $0$ and $\gamma_P(G)$ are necessarily included in the collection of desired sizes.
        The proof of Theorem~\ref{theorem every set realizable} appears in Section~\ref{subsection final proof grid gadgets}.

        \begin{customthm}{\ref{theorem every set realizable}}
            Let $s\in \mathbb{N}$.
            Let $R\subseteq \{0,1,\dots,s\}$ be a set such that $0,s\in R$.
            Then there exists a graph $G$ such that the set of useful sizes of $G$ is exactly $R$.
        \end{customthm}
        
        Our second main result concerns the situation where the cost function is minimized while using $\gamma_P(G)$ sensors.
        A \emph{fort} in a graph $G$ is a subset $F\subseteq V(G)$ with the property that there does not exist a vertex $x\in V(G)\setminus F$ with a exactly one neighbor in $F$.
        Forts were first introduced by Fast and Hicks~\cite{FH2018} in the context of zero-forcing.
        We define the \emph{minimum fort number} of the graph $G$, $\underline{f}(G)$ to be the size of the smallest fort in $G$.
        Based on the minimum fort number, we can give a large range of $\beta$ for which $\gamma_P$ is useful.
        The proof of Theorem~\ref{theorem when to use gamma sensors} appears in Section~\ref{subsection using gamma sensors}.
        
        \begin{customthm}{\ref{theorem when to use gamma sensors}}
            Let $G$ be a graph on $n$ vertices. 
            Let 
            \[
            B:=\begin{cases}
                1&\text{ if }\underline{f}(G)=1,\\
                \frac{1}{2}&\text{ if }\underline{f}(G)=2,\\
                \frac{1}{3}-\frac{\underline{f}(G)-3}{3(\underline{f}(G)+3\gamma_P(G)-6)}&\text{ if }\underline{f}(G)\geq 3.
            \end{cases}
            \]
            If $\displaystyle \beta\geq \max\left\{B,\frac{\gamma_P(G)}{n}\right\}$, then any minimum power dominating set is $\beta$-best.
            Furthermore, this is best-possible if $\underline{f}(G)\leq 3$.
        \end{customthm}
        
        In addition to these two results, we study marginal cost and marginal observance; i.e. the change in cost/observance when choosing to use one extra sensor, and provide some results on the relationship between marginal observance and useful sizes. 
    
    \subsection{Definitions, notation, and organization}
    
        The \emph{entrance} of the fort $F$, denoted $\mathfrak{e}(F)$ is defined to be the set of vertices in $V(G)\setminus F$ that have a neighbor in $F$.
        We also let $\mathfrak{t}(F):=F\cup \mathfrak{e}(F)$ denote the fort along with it's entrance.
        
        Given two graphs $G$ and $H$, the Cartesian product $G\square H$ is the graph with vertex set $V(G)\times V(H)$, and $(i,j)$ is adjacent to $(i',j')$ in $G\square H$ if and only if either both $i=i'$ and $jj'\in E(H)$ or both $j=j'$ and $ii'\in E(G)$.
        
        Given some function $f$ with domain $D$, we write $\displaystyle\argmax_{d\in D} f(d)$ to denote the set of all maximizers of $f$, i.e. values $d^*\in D$ such that $f(d^*)=\displaystyle\max_{d\in D} f(d)$.
        
        The rest of the paper is organized as follows.
        In Section~\ref{section useful sizes} we prove Theorem~\ref{theorem every set realizable}.
        In Section~\ref{section marginal cost}, we introduce the ideas of marginal cost and marginal observance, and then use these ideas to prove Theorem~\ref{theorem when to use gamma sensors}.
        In Section~\ref{section concluding remarks}, we provide some open questions and concluding remarks.

\section{Useful sizes}\label{section useful sizes}
    Our main goal in this section is to prove Theorem~\ref{theorem every set realizable}.
    To do this, we will construct a graph $H$ with the desired set of useful sizes, say $R$, in mind.
    We start constructing $H$ with a clique $K$ of size $\max(R)$.
    For each $i\in R\setminus \{0\}$ we will affix a ``gadget'' to $i$ vertices of $K$.
    Each gadget will only be fully observed if each vertex of the associated set of $i$ vertices in $K$ has a sensor; otherwise, only a relatively small amount of the gadget will be observed.
    The sizes of the gadgets start out very large, and decrease as $i$ increases.
    The rate at which the gadget size decreases will be chosen to ensure that $i$ is a useful size.
    For each $j\not\in R$, there will be no corresponding gadget and thereby $j$ will not be a useful size.
    The explicit construction of these gadgets is described in Section~\ref{subsection grid gadget}.

    The gadgets are based off of cylindrical grid graphs, i.e., the Cartesian product $C_a\Box P_b$. 
    In Section~\ref{subsection bounding grid gadgets}, we will bound how much of these graphs can be observed if there are not sensors placed on a certain set.
    More specifically, if $a$ is much larger than $b$, it was shown in \cite{DH2006} that $\gamma_P(C_a\Box P_b)$ is about $b/4$.
    We will prove that if fewer than $t/4$ sensors are placed on $C_a\Box P_b$, then  the maximum amount observed depends only on $b$, not on $a$.
    This key fact allows our gadgets to have the desired property that if we do not have sensors on a specific set, we do not observe many vertices.
    
    To complete the proof, in Section~\ref{subsection final proof grid gadgets} we establish Lemma~\ref{lemma characterization of beta good sizes}, which gives an efficient characterization of which sizes are useful based on the values of $\mathrm{maxObs}(G;k)$ for $k\in [\gamma_P(G)]$ for any graph $G$.
    Then, we formally describe the construction of $H$, and the rest of the proof is dedicated to establishing crude, but good-enough bounds on $\mathrm{maxObs}(H;k)$ to apply Lemma~\ref{lemma characterization of beta good sizes}.
    
    \subsection{Grid gadgets}\label{subsection grid gadget}
        We now describe the main constructive tool that allows us to control which sizes are useful.
        
        \begin{construction}
            We construct the $\boxplus^{a}_{\ell,2m}$-gadget $G'$ by performing the following steps.
            \begin{enumerate}
                \item 
                    Start with the cylindrical grid $P_{\ell} \square C_{2m}$ with vertex set $[\ell] \times [2m]$ and the usual edge set.
                    Call these $2m\ell$ vertices in this $P_{\ell} \square C_{2m}$ the \emph{grid vertices}.
                \item 
                    Append a leaf to each grid vertex of the form $(t,1)$ and $(t,2)$ for each $t \in [\ell]$.
                    This creates $2\ell$ new vertices we call $L_1$.
                \item 
                    Append a leaf to each vertex in $L_1$.
                    This creates another $2\ell$ new vertices we call $L_2$.
                \item 
                    Add $a$ new vertices labeled as $x_1,\dots,x_a$.
                    We call these the \emph{affix vertices}.
                \item 
                    Add the edge $yx_i$ for each $y \in L_2$ and for each affix vertex $x_i$.
                    Then subdivide $yx_i$, which creates $2a\ell$ vertices with degree 2 we call $L_3$. 
            \end{enumerate}
            The resulting gadget $G'$ has vertices cleanly delineated as affix vertices, grid vertices, or members of $L_1, L_2,$ or $L_3$.
        \end{construction}
        We also borrow the affix operation from~\cite{BEKV2025}.
        \begin{construction}[Affix Gadget Operation, $G = \glue{G_0}{A}{G'}$]\label{construction affix gadget operation}
            Given a graph $G_0$, $A\subseteq V(G_0)$, and a gadget $G'=\boxplus^{|A|}_{\ell,k}$ we construct the graph $G = G_0\boxminus_A G'$ by identifying each $w\in A$ with a unique affix vertex of $G'$.
            We say that the gadget $G'$ has been \emph{affixed} to the graph $G_0$ to create the graph $G$.
        \end{construction}
        Technically the above construction is not uniquely defined---there are many ways in which one could identify the vertices in $A$ with affix vertices.
        For our purposes, the specific identification is irrelevant, so we allow an arbitrary choice of identification.
        Figure~\ref{figure P4 grid gadget} shows the construction of the gadget $G'=\boxplus^{3}_{4,6}$ and affixes $G'$ to $G_0=K_4$ at $\{u_1, u_2, u_3\}$.
        
        \begin{figure}[htbp]
            \centering
            \begin{tikzpicture}
                \node [style=Small Black Node] (0) at (8.75, -0.5) {};
                \node [style=Small Black Node] (1) at (9.25, -1) {};
                \node [style=Small Black Node] (2) at (8.5, -1) {};
                \node [style=Small Black Node] (3) at (7.75, -1) {};
                \node [style=Small Black Node] (5) at (8, -0.5) {};
                \node [style=Small Black Node] (8) at (7.25, -0.5) {};
                \node [style=Small Black Node] (10) at (8.75, -2) {};
                \node [style=Small Black Node] (11) at (9.25, -2.5) {};
                \node [style=Small Black Node] (12) at (8.5, -2.5) {};
                \node [style=Small Black Node] (13) at (7.75, -2.5) {};
                \node [style=Small Black Node] (15) at (8, -2) {};
                \node [style=Small Black Node] (18) at (7.25, -2) {};
                \node [style=Small Black Node] (20) at (8.75, 1) {};
                \node [style=Small Black Node] (21) at (9.25, 0.5) {};
                \node [style=Small Black Node] (22) at (8.5, 0.5) {};
                \node [style=Small Black Node] (23) at (7.75, 0.5) {};
                \node [style=Small Black Node] (25) at (8, 1) {};
                \node [style=Small Black Node] (28) at (7.25, 1) {};
                \node [style=Small Green Node] (29) at (6.25, -1) {};
                \node [style=Small Green Node] (30) at (6.25, -0.5) {};
                \node [style=Small Green Node] (31) at (6.25, -2.5) {};
                \node [style=Small Green Node] (32) at (6.25, -2) {};
                \node [style=Small Green Node] (33) at (6.25, 0.5) {};
                \node [style=Small Green Node] (34) at (6.25, 1) {};
                \node [style=Small Red Node] (35) at (5.25, -1) {};
                \node [style=Small Red Node] (36) at (5.25, -0.5) {};
                \node [style=Small Red Node] (37) at (5.25, -2.5) {};
                \node [style=Small Red Node] (38) at (5.25, -2) {};
                \node [style=Small Red Node] (39) at (5.25, 0.5) {};
                \node [style=Small Red Node] (40) at (5.25, 1) {};
                \node [style=Small Black Node] (41) at (8.75, 2.5) {};
                \node [style=Small Black Node] (42) at (9.25, 2) {};
                \node [style=Small Black Node] (43) at (8.5, 2) {};
                \node [style=Small Black Node] (44) at (7.75, 2) {};
                \node [style=Small Black Node] (45) at (8, 2.5) {};
                \node [style=Small Black Node] (48) at (7.25, 2.5) {};
                \node [style=Small Green Node] (49) at (6.25, 2) {};
                \node [style=Small Green Node] (50) at (6.25, 2.5) {};
                \node [style=Small Red Node] (51) at (5.25, 2) {};
                \node [style=Small Red Node] (52) at (5.25, 2.5) {};
                \node [style=Small Black Node] (54) at (0, 0) {};
                \node [style=Small Black Node] (55) at (0, 5) {};
                \node [style=Small Blue Node] (56) at (2, 3) {};
                \node [style=Small Blue Node] (57) at (2.5, 3) {};
                \node [style=Small Blue Node] (58) at (1, 3) {};
                \node [style=Small Blue Node] (59) at (1.5, 3) {};
                \node [style=Small Blue Node] (60) at (3, 3) {};
                \node [style=Small Blue Node] (61) at (3.5, 3) {};
                \node [style=Small Blue Node] (62) at (4, 3) {};
                \node [style=Small Blue Node] (63) at (4.5, 3) {};
                \node [style=Small Blue Node] (64) at (1, -0.75) {};
                \node [style=Small Blue Node] (65) at (1, -0.25) {};
                \node [style=Small Blue Node] (66) at (1, -1.75) {};
                \node [style=Small Blue Node] (67) at (1, -1.25) {};
                \node [style=Small Blue Node] (68) at (1, 0.25) {};
                \node [style=Small Blue Node] (69) at (1, 0.75) {};
                \node [style=Small Blue Node] (70) at (1, 1.25) {};
                \node [style=Small Blue Node] (71) at (1, 1.75) {};
                \node [style=Small Black Node] (72) at (0, -5.5) {};
                \node [style=Small Blue Node] (73) at (2, -3) {};
                \node [style=Small Blue Node] (74) at (2.5, -3) {};
                \node [style=Small Blue Node] (75) at (1, -3) {};
                \node [style=Small Blue Node] (76) at (1.5, -3) {};
                \node [style=Small Blue Node] (77) at (3, -3) {};
                \node [style=Small Blue Node] (78) at (3.5, -3) {};
                \node [style=Small Blue Node] (79) at (4, -3) {};
                \node [style=Small Blue Node] (80) at (4.5, -3) {};
                \node [style=Small Black Node] (81) at (-1.5, 0) {};
                \node [style=Text Node] (82) at (-0.55, 0.25) {$u_2$};
                \node [style=Text Node] (83) at (-0.55, 5.25) {$u_1$};
                \node [style=Text Node] (84) at (-0.55, -5.25) {$u_3$};
                \node [style=Text Node] (85) at (-2.05, 0.25) {$u_4$};
                \draw [style=Black Edge] (10) to (11);
                \draw [style=Black Edge] (11) to (12);
                \draw [style=Black Edge] (12) to (13);
                \draw [style=Black Edge] (13) to (18);
                \draw [style=Black Edge] (15) to (10);
                \draw [style=Black Edge] (18) to (8);
                \draw [style=Black Edge] (8) to (28);
                \draw [style=Black Edge] (25) to (20);
                \draw [style=Black Edge] (20) to (21);
                \draw [style=Black Edge] (21) to (22);
                \draw [style=Black Edge] (22) to (23);
                \draw [style=Black Edge] (23) to (28);
                \draw [style=Black Edge] (5) to (0);
                \draw [style=Black Edge] (0) to (1);
                \draw [style=Black Edge] (1) to (2);
                \draw [style=Black Edge] (2) to (3);
                \draw [style=Black Edge] (3) to (8);
                \draw [style=Black Edge] (3) to (13);
                \draw [style=Black Edge] (22) to (2);
                \draw [style=Black Edge] (2) to (12);
                \draw [style=Black Edge] (3) to (23);
                \draw [style=Black Edge] (11) to (1);
                \draw [style=Black Edge] (1) to (21);
                \draw [style=Black Edge] (25) to (5);
                \draw [style=Black Edge] (5) to (15);
                \draw [style=Black Edge] (10) to (0);
                \draw [style=Black Edge] (0) to (20);
                \draw [style=Black Edge] (28) to (34);
                \draw [style=Black Edge] (33) to (23);
                \draw [style=Black Edge] (8) to (30);
                \draw [style=Black Edge] (29) to (3);
                \draw [style=Black Edge] (18) to (32);
                \draw [style=Black Edge] (31) to (13);
                \draw [style=Black Edge] (34) to (40);
                \draw [style=Black Edge] (39) to (33);
                \draw [style=Black Edge] (30) to (36);
                \draw [style=Black Edge] (35) to (29);
                \draw [style=Black Edge] (32) to (38);
                \draw [style=Black Edge] (31) to (37);
                \draw [style=Black Edge] (45) to (41);
                \draw [style=Black Edge] (41) to (42);
                \draw [style=Black Edge] (42) to (43);
                \draw [style=Black Edge] (43) to (44);
                \draw [style=Black Edge] (44) to (48);
                \draw [style=Black Edge] (48) to (50);
                \draw [style=Black Edge] (49) to (44);
                \draw [style=Black Edge] (50) to (52);
                \draw [style=Black Edge] (51) to (49);
                \draw [style=Black Edge] (41) to (20);
                \draw [style=Black Edge] (25) to (45);
                \draw [style=Black Edge] (42) to (21);
                \draw [style=Black Edge] (43) to (22);
                \draw [style=Black Edge] (44) to (23);
                \draw [style=Black Edge] (28) to (48);
                \draw [style=Black Edge] (18) to (15);
                \draw [style=Black Edge] (5) to (8);
                \draw [style=Black Edge] (28) to (25);
                \draw [style=Black Edge] (45) to (48);
                \draw [style=Black Edge] (66) to (54);
                \draw [style=Black Edge] (54) to (67);
                \draw [style=Black Edge] (64) to (54);
                \draw [style=Black Edge] (54) to (65);
                \draw [style=Black Edge] (68) to (54);
                \draw [style=Black Edge] (54) to (69);
                \draw [style=Black Edge] (70) to (54);
                \draw [style=Black Edge] (54) to (71);
                \draw [style=Black Edge] (58) to (55);
                \draw [style=Black Edge] (55) to (59);
                \draw [style=Black Edge] (56) to (55);
                \draw [style=Black Edge] (55) to (57);
                \draw [style=Black Edge] (60) to (55);
                \draw [style=Black Edge] (55) to (61);
                \draw [style=Black Edge] (62) to (55);
                \draw [style=Black Edge] (55) to (63);
                \draw [style=Black Edge] (63) to (52);
                \draw [style=Black Edge] (52) to (71);
                \draw [style=Black Edge] (70) to (51);
                \draw [style=Black Edge] (51) to (62);
                \draw [style=Black Edge] (61) to (40);
                \draw [style=Black Edge] (40) to (69);
                \draw [style=Black Edge] (68) to (39);
                \draw [style=Black Edge] (39) to (60);
                \draw [style=Black Edge] (57) to (36);
                \draw [style=Black Edge] (36) to (65);
                \draw [style=Black Edge] (64) to (35);
                \draw [style=Black Edge] (35) to (56);
                \draw [style=Black Edge] (59) to (38);
                \draw [style=Black Edge] (38) to (67);
                \draw [style=Black Edge] (66) to (37);
                \draw [style=Black Edge] (37) to (58);
                \draw [style=Black Edge] (75) to (72);
                \draw [style=Black Edge] (72) to (76);
                \draw [style=Black Edge] (73) to (72);
                \draw [style=Black Edge] (72) to (74);
                \draw [style=Black Edge] (77) to (72);
                \draw [style=Black Edge] (72) to (78);
                \draw [style=Black Edge] (79) to (72);
                \draw [style=Black Edge] (72) to (80);
                \draw [style=Black Edge] (80) to (37);
                \draw [style=Black Edge] (38) to (79);
                \draw [style=Black Edge] (78) to (35);
                \draw [style=Black Edge] (36) to (77);
                \draw [style=Black Edge] (74) to (39);
                \draw [style=Black Edge] (40) to (73);
                \draw [style=Black Edge] (76) to (51);
                \draw [style=Black Edge] (52) to (75);
                \draw [style=Black Edge] (81) to (72);
                \draw [style=Black Edge] (55) to (54);
                \draw [style=Black Edge] (54) to (72);
                \draw [style=Black Edge] (81) to (55);
                \draw [style=Black Edge] (81) to (54);
                \node [style=Text Node] (86) at (-1.25, 5.75) {\emph{Affix vertices}};
                \draw[rounded corners=10pt, style=Dashed Edge] (-0.75, -5.75) rectangle (0.5, 5.5);
                \node [style=Text Node] (86) at (4.35, 3.5) {$L_3$};
                \draw[rounded corners=10pt, style=Dashed Edge] (0.75, -3.25) rectangle (4.75, 3.25);
                \node [style=Text Node] (87) at (5, 3) {$L_2$};
                \draw[rounded corners=10pt, style=Dashed Edge] (4.875, -2.75) rectangle (5.625, 2.75);
                \node [style=Text Node] (88) at (6, 3) {$L_1$};
                \draw[rounded corners=10pt, style=Dashed Edge] (5.875, -2.75) rectangle (6.625, 2.75);
                \node [style=Text Node] (88) at (7, 3) {\emph{Grid vertices}};
                \draw[rounded corners=10pt, style=Dashed Edge] (7, -2.75) rectangle (9.5, 2.75);
            \end{tikzpicture}
            \caption{The graph $\glue{K_4}{\{u_1,u_2,u_3\}}{\boxplus^{3}_{4,6}}$}
            \label{figure P4 grid gadget}
        \end{figure}

    \subsection{Bounding the observance on a grid gadget}\label{subsection bounding grid gadgets}
        
        Borrowing notation from~\cite{DH2006}, define the following.
        Given a cylindrical grid graph $P_\ell\square C_{2m}$ with vertex set $[\ell]\times [2m]$, we will call sets of the form $[\ell]\times \{k\}$ for some fixed $k\in [2m]$ a \emph{column}.
        Let \[U_1=\{(x,y)\in [\ell]\times [2m]\mid x+y\text{ is odd}\}\text{, and }U_2=([\ell]\times [2m])\setminus U_1.\]
        The sets $U_1,U_2$ form a bipartition of $V(P_\ell\square C_{2m})$.

        We also adapt the following definitions from~\cite{DH2006}. 
        Given a graph $G$ and a set $T\subseteq V(G)$, the \emph{zero forcing closure} $C_G(T)$ is defined recursively: for any vertex $x$ \textbf{in the set} $T$, if $|N(x)\setminus T|=1$ then we add the single vertex in $N(x)\setminus T$ to $T$ and repeat.
        Note that $\mathrm{Obs}(G;S)=C_G(N[S])$.
        As this process is recursive, we can define a \emph{chronological list of forces}, denoted $\mathcal{F}(T) =(x_1\to y_1, x_2\to y_2,\dots,x_k\to y_k)$, to be a (not necessarily unique) sequence where the vertex $x_i$ causes the vertex $y_i$ to be added to $T$, resulting in $C_G(T)=T\cup \bigcup_{i=1}^k \{y_i\}.$
        A similar parameter, the \emph{star closure} $C^*_G(T)$ of $T$ is also defined recursively: for any vertex $x$ \textbf{in the graph} $G$ if $|N(x)\setminus T|=1$ then we add the single vertex in $N(x)\setminus T$ to $T$ and repeat.
        We note that the star closure is also the skew derived set as in \cite{IMA2010}, with connections to skew zero forcing.

        We now establish several lemmas which will be key to the proof of Theorem~\ref{theorem every set realizable}.
        Our first lemma will allow us to bound the size of our observed set based on the star closure.
        \begin{lemma}\label{lemma forcing the even and odd parts of a bipartite graph}
            Let $G$ be a bipartite graph with partite sets $V_1$ and $V_2$ and $\delta(G)\geq 2$. Given a set $T\subseteq V(G)$, we have
            \[ C_G(T)\subseteq C_G^*(T\cap V_1)\cup C_{G}^*(T\cap V_2). \]
        \end{lemma}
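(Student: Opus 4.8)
The plan is to prove the two containments $C_G(T)\cap V_1\subseteq C_G^*(T\cap V_1)$ and $C_G(T)\cap V_2\subseteq C_G^*(T\cap V_2)$ separately; since $C_G(T)=\bigl(C_G(T)\cap V_1\bigr)\cup\bigl(C_G(T)\cap V_2\bigr)$, taking the union then yields the lemma. I would prove each containment by inducting along a chronological list of forces $\mathcal{F}(T)=(x_1\to y_1,\dots,x_k\to y_k)$ witnessing $C_G(T)$, writing $T_0=T$ and $T_i=T_{i-1}\cup\{y_i\}$, so that $T_k=C_G(T)$. The inductive claim is that $T_i\cap V_1\subseteq C_G^*(T\cap V_1)$ and $T_i\cap V_2\subseteq C_G^*(T\cap V_2)$ hold simultaneously for every $i$. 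The base case $i=0$ is immediate, since each star closure contains its own seed set.

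For the inductive step, consider the force $x_i\to y_i$ and suppose without loss of generality that $y_i\in V_1$; because $G$ is bipartite this forces $x_i\in V_2$, so every neighbor of $x_i$ lies in $V_1$. The zero forcing rule guarantees that $y_i$ is the \emph{only} neighbor of $x_i$ outside $T_{i-1}$, hence all other neighbors of $x_i$ lie in $T_{i-1}\cap V_1$, which by the inductive hypothesis is contained in $C_G^*(T\cap V_1)$. Now I would exploit the crucial feature of the star closure: the forcing vertex need only be a vertex of $G$, not an element of the set being closed. Thus $x_i$ has at most one neighbor, namely $y_i$, lying outside the star-closed set $C_G^*(T\cap V_1)$. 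If $y_i$ were genuinely outside, the star rule applied to $x_i$ would force it in, contradicting closedness; therefore $y_i\in C_G^*(T\cap V_1)$. Since $T_i\cap V_2=T_{i-1}\cap V_2$ is unchanged, both halves of the inductive claim persist, and the case $y_i\in V_2$ is entirely symmetric.

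The main conceptual point — and where I expect the argument to need the most care — is precisely this reinterpretation: an ordinary zero forcing step, whose forcing vertex $x_i$ sits in the \emph{opposite} partite class from the vertex $y_i$ it adds, must be recast as a \emph{skew} (star) force inside the single-class closure $C_G^*(T\cap V_1)$, and this recasting is legitimate only because the star rule permits uncolored forcing vertices. The hypothesis $\delta(G)\geq 2$ plays a supporting role: it ensures that no vertex of $V_1$ can skew-force across into $V_2$ (such a vertex would need exactly one neighbor, i.e.\ degree one), so that each star closure $C_G^*(T\cap V_i)$ in fact remains inside $V_i$. This confines the two closures to disjoint partite classes, which is what makes the resulting union bound on $|C_G(T)|$ useful in the later grid estimates; it is not strictly required for the containment in the argument above, but I would record it as an accompanying observation, since it is exactly the feature the grid-gadget application relies on.
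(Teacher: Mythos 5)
Your proof is correct, and while it runs on the same engine as the paper's---walking a chronological list of forces and recasting each zero forcing step as a star force, exploiting that a star-forcing vertex need not belong to the set being closed---your inductive scaffolding is genuinely different, and stronger in one respect. The paper proves the union statement by a minimal-counterexample argument: to place the already-forced neighbors of the forcing vertex $x_{\tilde{i}}$ into the \emph{correct} one of the two closures, it must first establish the auxiliary claim $C_G^*(T\cap V_i)\subseteq V_i$, and that claim is precisely where $\delta(G)\geq 2$ is used (a degree-one vertex of $V_1$ would star-force its unique neighbor, a vertex of $V_2$, into $C_G^*(T\cap V_1)$). Your strengthened invariant---$T_i\cap V_1\subseteq C_G^*(T\cap V_1)$ and $T_i\cap V_2\subseteq C_G^*(T\cap V_2)$ simultaneously---builds this sorting into the induction hypothesis, so the auxiliary claim, and with it the minimum-degree hypothesis, is never needed: bipartiteness alone puts all neighbors of $x_i$ other than $y_i$ in $T_{i-1}\cap V_1$, and closedness of $C_G^*(T\cap V_1)$ under the star rule finishes the step. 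You thus obtain a per-class containment under weaker hypotheses, which you correctly flag. One small correction to your closing remark, though: the confinement $C_G^*(T\cap V_i)\subseteq V_i$ is \emph{not} what the grid application relies on. In Lemma~\ref{lemma bounded observence in grids} the paper only needs each seed $T\cap U_i$ to lie in a single partite class (which is automatic) so that Lemma~\ref{lemma star cover in cylider intersects columns} bounds each star closure by $|T\cap U_i|$ columns, after which a union bound over columns---not disjointness of the two closures---gives $|C_{G[X]}(T)|\leq \ell|T|$. The $\delta(G)\geq 2$ hypothesis is there to support the paper's internal proof, and your argument shows it could be dropped from the statement.
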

        \begin{proof}
            First we claim that $C_G^*(T\cap V_1)\subseteq V_1$.
            Towards a contradiction, assume that when starting the star forcing process from $T\cap V_1$, at some point a vertex from $V_2$ is forced, say $y\in V_2$. We may further assume without loss of generality that actually $y$ is the first vertex forced throughout the entire forcing process. Then there exists some $x\in V(G)$ with  $ N(x)\setminus T=\{y\}$.
            Since $G$ is bipartite and $x$ has a neighbor in $V_2$, $N(x)\subseteq V_2$.
            As $\delta(G) \geq 2$, $x$ must have at least one more neighbor, $z$, and $z\in V_2$.
            However, then we have $\{y,z\}\subseteq N(x) \setminus (T\cap V_1)$, so $x$ cannot force $y$, a contradiction.
            Similarly, we have $C_G^*(T\cap V_2)\subseteq V_2$.

            Now consider a chronological list of forces, $\mathcal{F}(T) =(x_1\to y_1, x_2\to y_2,\dots,x_k\to y_k)$ with $C_G(T)=T\cup \bigcup_{i=1}^k \{y_i\}.$
            Towards a contradiction, let $\tilde{i}$ be the smallest index such that $y_{\tilde{i}}\not\in C_G^*(T\cap V_1)\cup C_{G}^*(T\cap V_2)$.
            Assume without loss of generality that $y_{\tilde{i}}\in V_1$.
            Then $x_{\tilde{i}}\in V_2$ and $N(x_{\tilde{i}})\subseteq V_1$.
            We note that since $x_{\tilde{i}}$ was able to force $y_{\tilde{i}}$, we have that 
                \[ N(x_{\tilde{i}})\setminus \{y_{\tilde{i}}\}\subseteq T\cup \{y_1,y_2,\dots,y_{\tilde{i}-1}\}\subseteq C_G^*(T\cap V_1)\cup C_{G}^*(T\cap V_2). \]
            In particular, since $N(x_{\tilde{i}})\subseteq V_2$ and $C_{G}^*(T\cap V_1)\cap V_2=\emptyset$, we have $N(x_{\tilde{i}})\setminus \{y_{\tilde{i}}\}\subseteq C_{G}^*(T\cap V_2)$.
            However, by the definition of star closure, we should have had $y_{\tilde{i}}\in C_{G}^*(T\cap V_2)$, a contradiction.
            Thus,
                \[ C_G(T)\subseteq C_G^*(T\cap V_1)\cup C_{G}^*(T\cap V_2). \qedhere \]
        \end{proof}

        The following lemma will help us to estimate the observed set in a grid gadget $\boxplus^a_{\ell,2m}$ based on prior known work about power domination on the cylindrical grid graph $P_{\ell}\square C_{2m}$ from~\cite{DH2006}.
        In our application, the set $X$ below will be the grid vertices in a gadget, while the set $Y$ will be a subset of the vertices in the first two columns of the cylindrical grid graph, i.e. those which connect outside the cylindrical grid graph. 

        \begin{lemma}\label{lemma forcing on an induced subgraph}
            Let $G$ be a graph, and $X, F\subseteq V(G)$. 
            Let $\mathcal{F}(F) =(u_1\to v_1, u_2\to v_2,\dots,u_k\to v_k)$ with $\displaystyle C_G(F)=F\cup\bigcup_{i=1}^k \{v_i\}$ be a chronological list of forces.
            Let $Y\subseteq X$ be a collection of vertices such that if $u\to v\in \mathcal{F}(F)$ is such that $u\in V(G)\setminus X$ and $v\in X$, then $v\in Y$.
            Then, 
            \[ C_G(F)\cap X\subseteq C_{G[X]}((F\cap X)\cup Y) \]
        \end{lemma}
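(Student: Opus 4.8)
The plan is to induct along the given chronological list of forces $\mathcal{F}(F)$, tracking how much of the forced set has landed inside $X$. Write $S := (F\cap X)\cup Y$ for the starting set of the induced process, let $D := C_{G[X]}(S)$ be its closure, and for $0\le i\le k$ set $T_i := F\cup\{v_1,\dots,v_i\}$ (so $T_0 = F$ and $T_k = C_G(F)$). I would prove by induction on $i$ the invariant $T_i\cap X\subseteq D$; taking $i=k$ then yields exactly $C_G(F)\cap X\subseteq C_{G[X]}((F\cap X)\cup Y)$, which is the claim.

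The base case $i=0$ is immediate, since $T_0\cap X = F\cap X\subseteq S\subseteq D$. For the inductive step, assume $T_{i-1}\cap X\subseteq D$. Because $T_i = T_{i-1}\cup\{v_i\}$ and $T_{i-1}\cap X\subseteq D$, the invariant can only fail through $v_i$, so it suffices to treat the case $v_i\in X$ and show $v_i\in D$. Recall that the force $u_i\to v_i$ means $u_i\in T_{i-1}$ and $N_G(u_i)\setminus T_{i-1} = \{v_i\}$. If $u_i\notin X$, then this force points from $V(G)\setminus X$ into $X$, so the defining property of $Y$ gives $v_i\in Y\subseteq S\subseteq D$, as desired. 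The remaining case $u_i\in X$ is where the real content lies.

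So suppose $u_i\in X$. Then $u_i\in T_{i-1}\cap X\subseteq D$ by the inductive hypothesis. Every neighbor $w$ of $u_i$ in $G[X]$ other than $v_i$ satisfies $w\in N_G(u_i)\setminus\{v_i\}\subseteq T_{i-1}$ and $w\in X$, hence $w\in T_{i-1}\cap X\subseteq D$; thus $N_{G[X]}(u_i)\setminus D\subseteq\{v_i\}$. The crux is now to conclude $v_i\in D$ without simply ``forcing'' it: since $D$ is a zero forcing closure in $G[X]$, it is closed, so no vertex of $D$ can have exactly one neighbor outside $D$. Were $v_i\notin D$, the vertex $u_i\in D$ would satisfy $N_{G[X]}(u_i)\setminus D = \{v_i\}$ and could force, contradicting closedness. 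Hence $v_i\in D$, completing the induction. I expect this final observation to be the main obstacle: one must argue via the closedness of the already-saturated set $D$ rather than by appending a force, and one must keep careful track of which of $u_i$ and $v_i$ lies in $X$ so that the hypothesis on $Y$ is invoked exactly when a force crosses into $X$ from outside.
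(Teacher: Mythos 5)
Your proof is correct and is essentially the paper's own argument: the paper runs the same reasoning as a minimal-counterexample over the chronological list of forces (invoking the property of $Y$ when a force crosses into $X$, and the closedness of $C_{G[X]}((F\cap X)\cup Y)$ when $u_i\in X$), which is just the contrapositive packaging of your induction.
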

        \begin{proof}
            Towards a contradiction, let $\tilde{i}$ denote the smallest index in $\mathcal{F}(F)$ such that 
                \[ \displaystyle v_{\tilde{i}}\in X\setminus C_{G[X]}((F\cap X)\cup Y). \]
           Thus, $u_{\tilde{i}}\not\in Y$ which implies that $v_{\tilde{i}}\in X$. 
            Furthermore,
                \[ N_G[u_{\tilde{i}}]\setminus \{v_{\tilde{i}}\}\subseteq F\cup \{v_1,v_2,\dots,v_{\tilde{i}-1}\}. \]
            Observe that $N_{G[X]}[u_{\tilde{i}}]\setminus \{v_{\tilde{i}}\}=(N_G[u_{\tilde{i}}]\setminus \{v_{\tilde{i}}\})\cap X$. 
            Then, 
                \[ (N_G[u_{\tilde{i}}]\setminus \{v_{\tilde{i}}\})\cap X\subseteq (F\cup \{v_1,v_2,\dots,v_{\tilde{i}-1}\})\cap X\subseteq C_{G[X]}((F\cap X)\cup Y), \]
            where the last relation follows by minimality of $\tilde{i}$.
            However, this means that in $G[X]$, $u_{\tilde{i}}$ can force $v_{\tilde{i}}$ but $v_{\tilde{i}} \not \in C_{G[X]}((F\cap X)\cup Y)$, a contradiction.
        \end{proof}

        We also need the following result from~\cite{DH2006}, which we use to bound the number of observed grid vertices.
        \begin{lemma}[Lemma 2 from \cite{DH2006}]\label{lemma star cover in cylider intersects columns}
            Let $m\geq n\geq 2$ and let $G=P_n\square C_{2m}$ with partite sets $U_1$ and $U_2$.
            If $T\subseteq U_i$ for some $i\in \{1,2\}$ and $|T|<n$, then $C^*(T)$ contains vertices from at most $|T|$ columns.
        \end{lemma}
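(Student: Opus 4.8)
The plan is to track a chronological list of star forces $\mathcal{F}(T)=(u_1\to v_1,\dots,u_k\to v_k)$ realizing $C^*(T)$ and to charge each column met by $C^*(T)$ to a distinct vertex of $T$. The first step is to confine the whole process to one side of the bipartition. Since $\delta(P_n\square C_{2m})\ge 3$, the argument used to prove Lemma~\ref{lemma forcing the even and odd parts of a bipartite graph} applies verbatim and yields $C^*(T)\subseteq U_i$. Consequently every forcer $u_j$ lies in $U_{3-i}$, all of its (at least three) neighbours lie in $U_i$, it forces at most once, and it forces only at the moment when exactly one of its neighbours is still uncoloured.

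The heart of the argument is to understand how a column first becomes occupied. I claim that the force first placing a vertex of $C^*(T)$ into a previously empty column $c$ must originate in a \emph{neighbouring} column. Here one uses crucially that each column spans a full cycle $C_{2m}$, so every vertex has two neighbours within its own column: a forcer lying inside $c$ would therefore have a second neighbour inside $c$, and for it to force that neighbour would already have to be coloured, contradicting that $c$ was empty. Thus each newly occupied (``derived'') column is entered from an adjacent column, and the entering force consumes already-coloured vertices there. I would then build a charging map: a column already meeting $T$ is charged to one of its own vertices of $T$, while each derived column is charged by following the forcing chains backwards from its entering force to a seed vertex of $T$. The hypotheses $m\ge n$ and $|T|<n$ guarantee that no single column is ever saturated in $U_i$, which rules out degenerate forcing that runs ``around'' a cycle, and they ensure at least one column stays empty to anchor the count.

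The main obstacle is making this charge injective. Star forcing cascades badly: a block of $T$ concentrated in one column can trigger a chain of forces that sweeps across a whole run of consecutive columns, so a crude one-new-column-per-force estimate is hopelessly weak, and one must argue that the seeds reached by backtracking from distinct new columns are themselves distinct and previously unused. Pinning down this injection is exactly where the cylinder geometry enters, and it is the delicate part of the argument in~\cite{DH2006}. As an alternative to the explicit charge, I would try induction on the path length $n$, deleting an end column of $P_n$ and using an induced-subgraph argument in the spirit of Lemma~\ref{lemma forcing on an induced subgraph} to relate $C^*(T)$ on $P_n\square C_{2m}$ to its restriction on $P_{n-1}\square C_{2m}$.
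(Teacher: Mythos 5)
There is an important mismatch to flag first: the paper does not prove this statement at all --- it is quoted as Lemma~2 of \cite{DH2006} and used as a black box --- so your attempt has to stand entirely on its own, and it does not. The decisive problem is that you have the cylinder's geometry backwards. In the paper's notation a column of $P_n\square C_{2m}$ is a set of the form $[n]\times\{k\}$, i.e.\ an induced copy of the \emph{path} $P_n$; there are $2m$ such columns, one for each cycle coordinate. Your assertion that ``each column spans a full cycle $C_{2m}$, so every vertex has two neighbours within its own column'' is therefore false: the endpoints $(1,k)$ and $(n,k)$ of column $k$ have exactly one neighbour in their own column. This breaks the heart of your argument, namely the claim that the first force into an empty column must originate in an adjacent column. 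Concretely, if $(1,c-1)$ and $(1,c+1)$ are both coloured (they lie in the same partite set, so this is consistent) while column $c$ is empty, then $(1,c)$ --- a vertex of column $c$ itself, which need not be coloured since the star closure lets any vertex of the graph force --- has exactly one uncoloured neighbour, $(2,c)$, and forces it. So empty columns can be entered from within. Your misreading also collides with how the lemma is deployed in the paper: the proof of Lemma~\ref{lemma bounded observence in grids} needs each column to have $\ell$ vertices so that the final bound $|C_{G[X]}(T)|\leq \ell|T|$ is independent of $m$; under your reading (only $\ell$ columns, each of size $2m$) the conclusion would be useless for that purpose.

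Even setting the geometry aside, the proposal is not a proof. You explicitly leave the injectivity of the charging map open and describe it as ``the delicate part of the argument in~\cite{DH2006}''; deferring the central step to the very reference being proved means nothing has been established. The fallback suggestion --- induction on $n$ by deleting an end layer of $P_n$ in the spirit of Lemma~\ref{lemma forcing on an induced subgraph} --- is a one-sentence intention, and the layer you would delete, $\{1\}\times[2m]$, is a cycle transversal to the columns, so even its interaction with the column count is unexamined. A further incorrect side claim: $|T|<n$ does not prevent a column from being ``saturated in $U_i$,'' since a column meets $U_i$ in only $\lceil n/2\rceil\leq n-1$ vertices, all of which could lie in $T$. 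If you want a self-contained proof, the place to start is a correct classification of how a force can first enter an empty column (either from an adjacent column, or from a path-endpoint vertex of the empty column whose two cyclic neighbours are coloured), and then build the counting argument around that classification.
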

        
        The final tool we need is the following lemma, which bounds how much of a $\boxplus^{a}_{\ell,2m}$-gadget any subset $S\subseteq V(G)$ can observe if $S$ does not include the entire affix set.
        The specific bound provided below is not particularly enlightening, but we remind the reader that the crucial aspect is that the bound is independent of the parameter $m$.
        \begin{lemma}\label{lemma bounded observence in grids}
            Let $s\in \mathbb{N}$ and $G'$ be a graph with $A\subseteq V(G')$ a set such that every vertex in $A$ is adjacent to at least $s+1$ leaves in $V(G')\setminus A$.
            Let $m\geq \ell>4s$.
            Let $G:=G'\boxminus_A\boxplus_{\ell,2m}^{|A|}$.
            Then for any set $S\subseteq V(G)$ with $|S|=s$, if $A\subseteq S$, then every vertex in $V(G)\setminus V(G')$ is observed by $S$.
            On the other hand,
            if $A\not\subseteq S$,
            we have that
            \[ |\mathrm{Obs}(G;S)\cap (V(G)\setminus V(G'))|\leq 2\ell(|A|+3s+2). \]
            
        \end{lemma}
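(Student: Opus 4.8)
The plan is to prove the two implications separately, using the layered structure $L_1,L_2,L_3$ of the gadget as a sequence of ``buffers'' between the affix vertices and the cylindrical grid $P_\ell\square C_{2m}$. For the case $A\subseteq S$ I would argue by a direct forcing cascade. Since every affix vertex lies in $S$ and is adjacent to all of its associated $L_3$-vertices, the domination step observes all of $L_3$. Each $z\in L_3$ has exactly two neighbors, one in $L_2$ and one affix vertex (already observed), so $z$ forces its $L_2$-neighbor, and all of $L_2$ becomes observed. Each $y\in L_2$ then has its unique unobserved neighbor equal to its $L_1$-parent, so $L_1$ is observed, and each $w\in L_1$ in turn forces its grid-parent in column $1$ or $2$. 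With columns $1$ and $2$ fully observed, a standard inward wave forces column $3$ from column $2$, then column $4$, and so on around $C_{2m}$, observing every grid vertex; hence all of $V(G)\setminus V(G')$ is observed.

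For the case $A\not\subseteq S$, fix an affix vertex $x^*\in A\setminus S$ and let $X$ denote the set of grid vertices, so that $G[X]=P_\ell\square C_{2m}$. The first step is a leaf argument. Since $x^*\notin S$, a leaf $p$ of $x^*$ in $V(G')$ lies in $N[S]$ only if $p\in S$, and $x^*$ can force such a leaf only when it is the unique unobserved neighbor of $x^*$. Because $x^*$ has at least $s+1$ leaves while $|S|=s$, I would show the dichotomy: either all $s$ sensors sit on leaves of $x^*$, in which case the process halts immediately and no vertex of $V(G)\setminus V(G')$ is observed (so the bound is trivial), or at least two leaves of $x^*$ remain permanently unobserved, so that $x^*$ always has at least two unobserved neighbors and hence never forces any of its $L_3$-neighbors.

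The consequence I would exploit is that each $L_2$-vertex $y$ retains a ``dangling'' neighbor $z_{y,*}\in L_3$ (the one joined to $x^*$) that $x^*$ can never force. This creates a deadlock: to force its $L_1$-parent $w$, the vertex $y$ needs all of its $L_3$-neighbors, including $z_{y,*}$, observed, but $z_{y,*}$ can only be observed via a sensor placed at $y$ or at $z_{y,*}$, or after $w$ is itself observed by other means. A charging argument then bounds the number of $L_1$-vertices that ever force their grid-parent: each such event must be paid for by a sensor placed near the corresponding $y$, so the set $Y$ of grid vertices forced into $X$ from outside satisfies $|Y|=O(s)$ and is contained in columns $1$ and $2$. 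I expect this charging step to be the main obstacle, since one must rule out long cascades that observe some $z_{y,*}$ indirectly; this is precisely where the hypothesis that each vertex of $A$ has at least $s+1$ leaves is used.

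To finish, I would apply Lemma~\ref{lemma forcing on an induced subgraph} with $F=N[S]$ and the set $Y$ above to obtain $\mathrm{Obs}(G;S)\cap X\subseteq C_{G[X]}\big((N[S]\cap X)\cup Y\big)$, writing $T_0=(N[S]\cap X)\cup Y$. Since $G[X]=P_\ell\square C_{2m}$ is bipartite with partite sets $U_1,U_2$ and minimum degree $3\ge 2$, Lemma~\ref{lemma forcing the even and odd parts of a bipartite graph} gives $C_{G[X]}(T_0)\subseteq C^*(T_0\cap U_1)\cup C^*(T_0\cap U_2)$. Because $|N[S]\cap X|\le 5s$ and $|Y|=O(s)$, the hypothesis $\ell>4s$ guarantees $|T_0\cap U_i|<\ell$, so Lemma~\ref{lemma star cover in cylider intersects columns} (applied with $m\ge\ell$) bounds each star closure by at most $|T_0\cap U_i|$ columns, hence by $|T_0\cap U_i|\cdot\ell$ grid vertices. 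Summing yields at most $6s\ell$ observed grid vertices; adding the trivial bound $|L_1\cup L_2\cup L_3|=2\ell(|A|+2)$ for the remaining vertices of $V(G)\setminus V(G')$ gives $|\mathrm{Obs}(G;S)\cap(V(G)\setminus V(G'))|\le 2\ell(|A|+3s+2)$, as required.
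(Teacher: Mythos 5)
Your plan is correct and follows essentially the same route as the paper's proof: the same forcing cascade when $A\subseteq S$, and, when $A\not\subseteq S$, the same split into the trivially-bounded set $L_1\cup L_2\cup L_3$ and the grid $X$, treated by combining Lemma~\ref{lemma forcing on an induced subgraph}, Lemma~\ref{lemma forcing the even and odd parts of a bipartite graph}, and Lemma~\ref{lemma star cover in cylider intersects columns}. The charging step you flag as the main obstacle is precisely the paper's Claim~\ref{claim U has property U}, and it closes with exactly the pieces you already set up; no cascade analysis is needed because $z_{y,*}$ has degree two: if an $L_1$-vertex $w$ forces its grid-parent and no sensor sits at $w$ or at its $L_2$-child $y$, then $y$ must have forced $w$, so $z_{y,*}$ was observed beforehand, yet $z_{y,*}$ can only become observed via a sensor at $z_{y,*}$, domination by $y$ or $x^*$, or a force by $y$ or $x^*$; here $x^*\notin S$ and never forces (your dichotomy), $y\notin S$ in this case, and $y$ cannot force both $z_{y,*}$ and $w$, so $z_{y,*}\in S$ and the event is paid for. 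The one inaccurate assertion in your write-up is that $\ell>4s$ ``guarantees'' $|T_0\cap U_i|<\ell$: from $|T_0|\le 6s$ alone this fails when $\ell=4s+1$. It is repaired by counting per partite set: a sensor in $X$ places at most $4$ vertices of $N[S]\cap X$ in each $U_i$ (itself on one side, its at most $4$ neighbors on the other), while a sensor in $L_1\cup L_2\cup L_3$ accounts for at most one vertex of $T_0$, so $|T_0\cap U_i|\le 4s<\ell$. (The paper's own proof invokes Lemma~\ref{lemma star cover in cylider intersects columns} without verifying this hypothesis either, so this is a shared and easily repaired gloss rather than a defect of your approach.)
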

        \begin{proof}
            If $A\subseteq S$, then $L_3\subseteq N[S]$. 
            Then the vertices in $L_3$ force all the vertices in $L_2$, and then the vertices in $L_2$ force all the vertices in $L_1$, and then the vertices in $L_1$ force the vertices in $\{(t,1),(t,2)\mid t\in [\ell]\}$.
            These vertices in turn are able to force all of the grid vertices, so all of $V(G)\setminus V(G')$ is observed.
            
            Now suppose $A\not\subseteq S$.
            Let $X$ denote the set of grid vertices in $\boxplus_{\ell,2m}^{|A|}$ and let $L:=L_1\cup L_2\cup L_3$.
            Furthermore, $|L|=2\ell(|A|+2)$, so
            \begin{equation}\label{equation Obs cap L is at most the size of L}
                |\mathrm{Obs}(G;S)\cap L|\leq |L| = 2\ell(|A|+2).
            \end{equation}
            
            Now, to bound $|\mathrm{Obs}(G;S)\cap X|$, we need the following claim. 
       
            \begin{claim}\label{claim U has property U}
                Let $x$ be a vertex in $\{(t,1),(t,2)\mid t\in [\ell]\}$, let $y\in L_1$ be the unique neighbor of $x$ in $L_1$, and let $z\in L_2$ be the unique neighbor of $y$ in $L_2$.
                If $x$ is either observed by $y$ in the dominating step or forced by $y$, then there exists a vertex of $S$ in $N[z]$. 
            \end{claim}
            \begin{adjustbox}{minipage=\dimexpr.9\linewidth,center}
                \begin{proof}
                    Assume to the contrary that $S\cap N[z]=\emptyset$, and so $y,z\not\in S$.
                    Thus, $y$ forces $x$.
                    Moreover, $y$ must have been forced by $z$ before $y$ could force $x$. 
                    By assumption, $A\not\subseteq S$, so there exists $a\in A\setminus S$.
                    Let $w\in N(a)\cap N(z)$ be the degree $2$ vertex in $L_3$.
                    Note that $w$ must be observed before $y$ is forced by $z$.
                    Since $N[w]=\{w,a,z\}$, and none of these vertices are in $S$, for $w$ to be observed, it must be forced by $a$, but $a$ is adjacent to $s+1$ leaves, at least one of which must not be in $S$, and thus which is not observed.
                    Thus $a$ cannot force $w$, a contradiction.
                \end{proof}
            \end{adjustbox}\medskip
            
            \noindent We now will construct a ``small'' set $T\subseteq X$ that has the property that
                \[ 
                \mathrm{Obs}(G;S)\cap X\subseteq C_{G[X]}(T). 
                \]
             
             First, for $x\in \{(t,1),(t,2)\mid t\in [\ell]\}$, let $z_x$ denote the unique vertex in $L_2$ such that $d(x,z_x)=2$. Then let  $Y\subseteq \{(t,1),(t,2)\mid t\in [\ell]\}$ be the set of all vertices $x\in Y$ such that $N[z_x]$ contains a vertex from $S$, as in Claim~\ref{claim U has property U}.
             Further, each $N[z_x]$ is disjoint, hence $|Y|\leq s$.
             By construction and Claim~\ref{claim U has property U}, any vertex in $\{(t,1),(t,2)\mid t\in [\ell]\}$ that is dominated or forced by a vertex in $V(G)\setminus X$ is in $Y$.
             We now define
                \[ T:=(N[S]\cap X)\cup Y. \]
            By Lemma~\ref{lemma forcing on an induced subgraph},
            \begin{equation}\label{equation Obs cap X is contained inside zero forcing closure of T}
                \mathrm{Obs}(G;S)\cap X=C_G(N[S])\cap X\subseteq C_{G[X]}(T)
            \end{equation}
                
            We now bound $|T|$.
            Note that $|Y|\leq |S|=s$ since the neighborhoods of vertices in $L_2$ are all disjoint.
            Furthermore, since the maximum degree in $G[X]$ is $4$, we have $|N[S]\cap X|\leq 5|S|=5s$, giving us that
                \[ |T|\leq |N[S]\cap X|+|Y|\leq 6s. \]
            Towards applying Lemma~\ref{lemma forcing the even and odd parts of a bipartite graph}, let $U_1$ and $U_2$ be the partite sets of $G[X]\cong P_{\ell}\square C_{2m}$.
            Then, for $i\in \{1,2\}$, by Lemma~\ref{lemma star cover in cylider intersects columns}, $C^*_{G[X]}(T\cap U_i)$ contains vertices from at most $|T\cap U_i|$ columns of $G[X]$, and thus $C^*_{G[X]}(T\cap U_1)\cup C^*_{G[X]}(T\cap U_2)$ contains vertices from at most $|T\cap U_1|+|T\cap U_2|=|T|$ columns.
            Each column contains $\ell$ vertices, so
                \[ |C^*_{G[X]}(T\cap U_1)\cup C^*_{G[X]}(T\cap U_2)|\leq \ell |T|. \]
            By Lemma~\ref{lemma forcing the even and odd parts of a bipartite graph}, this gives us that
                \[ |C_{G[X]}(T)|\leq \ell|T|\leq 6s\ell. \]
            To finish the proof, we use~\eqref{equation Obs cap X is contained inside zero forcing closure of T} and~\eqref{equation Obs cap L is at most the size of L} to write
            \[
                |\mathrm{Obs}(G;S)\cap (V(G)\setminus V(G'))| = |\mathrm{Obs}(G;S)\cap L|+|\mathrm{Obs}(G;S)\cap X| \leq 2\ell(|A|+2)+6s\ell=2\ell(|A|+3s+2),
            \]
            as desired.
        \end{proof}

    \subsection{Proof of \texorpdfstring{Theorem~\ref{theorem every set realizable}}{Theorem 2.7}}\label{subsection final proof grid gadgets}
        The following lemma gives an easy-to-use characteristic of when a certain size is useful.
        \begin{lemma}\label{lemma characterization of beta good sizes}
            Let $G$ be a graph, and let $m_j:=\mathrm{maxObs}(G;j)$ for each $j\in [\gamma_P]\cup\{0\}$.
            For each $i\in [\gamma_P(G)-1]$, the size $i$ is useful if and only if
                \[ 
                \max_{0\leq j<i}\frac{i-j}{m_i-m_j}< \min_{i<j\leq \gamma_P(G)}\frac{j-i}{m_j-m_i}
                 \]
        \end{lemma}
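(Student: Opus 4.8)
The plan is to recast the statement as a question about the lower envelope of a family of lines. Writing $n=|V(G)|$, for each size $k\in[\gamma_P(G)]\cup\{0\}$ the function $\beta\mapsto \mathrm{C}(G;k,\beta)=k+\beta(n-m_k)$ is a line with intercept $k$ and slope $n-m_k$. Since any set $S$ of size $k$ satisfies $\mathrm{C}(G;S,\beta)\ge \mathrm{C}(G;k,\beta)$, with equality for some $S$ realizing $m_k$, we have $\mathrm{minC}(G;\beta)=\min_{0\le k\le \gamma_P(G)}\mathrm{C}(G;k,\beta)$, the pointwise minimum (lower envelope) of these lines. Moreover, a set of size $i$ that is $\beta$-best for some $\beta>0$ must observe $m_i$ vertices (otherwise its line lies strictly above line $i$ for every $\beta>0$, by the already-noted fact that $\beta$-best sets attain maximum observance). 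Hence I would first argue that \emph{size $i$ is useful if and only if the line $\mathrm{C}(G;i,\cdot)$ coincides with the lower envelope on a non-degenerate interval}; this reduces the entire claim to a convexity computation.

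Second, I would compute the pairwise intersections. Lines $i$ and $j$ meet where their values agree, i.e.\ at $\beta=\frac{j-i}{m_j-m_i}$. By \cref{observation more sensors means more observed} the values $m_k$ are strictly increasing in $k$, so the slopes $n-m_k$ are strictly decreasing in $k$; in particular every pairwise intersection point is positive. For $j<i$ the line $j$ is steeper and has smaller intercept, so it lies below line $i$ exactly for $\beta<\frac{i-j}{m_i-m_j}$ and above it for larger $\beta$; symmetrically, for $j>i$ the line $j$ is flatter with larger intercept and lies below line $i$ exactly for $\beta>\frac{j-i}{m_j-m_i}$.

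Third, combining these inequalities, line $i$ sits weakly below \emph{every} other line precisely when $\beta\ge \frac{i-j}{m_i-m_j}$ for all $j<i$ and $\beta\le\frac{j-i}{m_j-m_i}$ for all $j>i$; that is, for $\beta$ in the closed interval $[\beta^-,\beta^+]$ with $\beta^-=\max_{0\le j<i}\frac{i-j}{m_i-m_j}$ and $\beta^+=\min_{i<j\le\gamma_P(G)}\frac{j-i}{m_j-m_i}$. Both endpoints exist because the index sets $\{j:0\le j<i\}$ and $\{j:i<j\le\gamma_P(G)\}$ are nonempty for $i\in[\gamma_P(G)-1]$. This interval is non-degenerate exactly when $\beta^-<\beta^+$, which is the stated criterion, completing the proof.

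I expect the only real subtlety to be the reduction in the first step rather than the computation itself: one must carefully justify that a useful set of size $i$ forces the line $\mathrm{C}(G;i,\cdot)$ onto the lower envelope, and conversely that whenever line $i$ lies on the lower envelope over a non-degenerate interval, some concrete set of size $i$ attaining $m_i$ is genuinely $\beta$-best throughout. Both directions follow from the fact that $\beta$-best sets attain maximum observance, together with the observation that distinct lines cross only at isolated points, so that strict betterness of line $i$ holds on the open interval $(\beta^-,\beta^+)$. Everything after this reduction is the standard fact that a line appears on the lower envelope of a family of lines of strictly decreasing slope if and only if its left and right breakpoints are ordered correctly.
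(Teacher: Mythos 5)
Your proposal is correct and takes essentially the same route as the paper: the paper likewise fixes sets $S_j$ realizing $m_j$, compares costs pairwise via $\mathrm{C}(G;S_i,\beta)-\mathrm{C}(G;S_j,\beta)=i-j+\beta(m_j-m_i)$ together with Observation~\ref{observation more sensors means more observed}, and concludes that size $i$ is useful exactly when $\max_{0\leq j<i}\frac{i-j}{m_i-m_j}<\min_{i<j\leq\gamma_P(G)}\frac{j-i}{m_j-m_i}$, which are precisely your thresholds $\beta^-<\beta^+$. Your lower-envelope framing and explicit handling of the reduction (that $\beta$-best sets attain maximum observance) are a geometric restatement of the same pairwise-comparison argument rather than a genuinely different proof.
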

        \begin{proof}
            For each $j$ with $1\leq j\leq \gamma_P(G)$, let $S_j$ be a set with $|S|=j$ and $|\mathrm{Obs}(G;S_j)|=m_j$.
            We can compute
                \begin{equation}\label{eqn cost i minus cost j} \mathrm{C}(G;S_i,\beta)-\mathrm{C}(G;S_j,\beta)=i-j+\beta(m_j-m_i)
                \end{equation}
            If $j>i$, by Observation~\ref{observation more sensors means more observed} $m_j - m_i > 0$, so the right-hand side of \eqref{eqn cost i minus cost j} is non-positive whenever 
                \[ \beta\leq \frac{j-i}{m_j-m_i}. \] 
            Similarly, for $j<i$, we have that $m_j-m_i<0$ by Observation~\ref{observation more sensors means more observed}, so $\mathrm{C}(G;S_i,\beta)-\mathrm{C}(G;S_j,\beta)$ is non-positive as long as
                \[ \beta\geq \frac{j-i}{m_j-m_i}=\frac{i-j}{m_i-m_j}. \] 
            Thus, there exists a range of choices for $\beta$ such that $\mathrm{C}(G;S_i,\beta)$ is the minimum if and only if
                \[ \max_{0\leq j<i}\frac{i-j}{m_i-m_j}< \min_{i<j\leq \gamma_P(G)}\frac{j-i}{m_j-m_i}.\qedhere \] 
        \end{proof}
        
        Lemma~\ref{lemma characterization of beta good sizes} does not cover the case where $i=0$ or $i=\gamma_P(G)$.
        These two sizes are always useful, as shown in Observation~\ref{observation beta small or large}.

        We are now ready to prove Theorem~\ref{theorem every set realizable}.
        
        \begin{theorem}\label{theorem every set realizable}
            Let $s\in \mathbb{N}$.
            Let $R\subseteq \{0,1,\dots,s\}$ be a set such that $0,s\in R$.
            Then there exists a graph $G$ such that the set of useful sizes of $G$ is exactly $R$.
        \end{theorem}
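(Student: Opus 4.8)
The plan is to realize $R$ by the construction sketched at the start of this section and then read off the useful sizes from Lemma~\ref{lemma characterization of beta good sizes}. Write $R=\{r_0<r_1<\dots<r_p\}$ with $r_0=0$ and $r_p=s$. I would build $H$ as follows: start with a clique $K=\{v_1,\dots,v_s\}$, attach $s+1$ pendant leaves to each $v_j$ (so that the hypothesis of Lemma~\ref{lemma bounded observence in grids} is available at every clique vertex), and for each $t\in[p]$ affix a grid gadget $\boxplus^{r_t}_{\ell,2m^{(t)}}$ to the \emph{nested prefix} $A_{r_t}:=\{v_1,\dots,v_{r_t}\}$ via Construction~\ref{construction affix gadget operation}. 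Fix $\ell=4s+1$ and let the $m^{(t)}$ be a rapidly decreasing sequence, the precise rate being pinned down at the end. Let $N_t$ be the number of vertices of gadget $t$ lying outside the base graph (its grid, $L_1$, $L_2$, and $L_3$ vertices); then $N_t$ grows linearly in $m^{(t)}$, so $N_1>N_2>\dots>N_p$ may be made to dominate any fixed polynomial in $s$.

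First I would record $\gamma_P(H)=s$. Placing sensors on all of $K$ dominates the clique and every affix set, so by the first half of Lemma~\ref{lemma bounded observence in grids} every gadget is fully observed; hence $\gamma_P(H)\le s$. Conversely, the gadget affixed to $A_s=K$ can be fully observed only if $K\subseteq S$, which forces $|S|\ge s$, so $\gamma_P(H)=s$. By Observation~\ref{observation beta small or large} the sizes $0$ and $s$ are then automatically useful, and it remains to analyze $1\le k\le s-1$.

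The heart of the argument is an estimate for $m_k:=\mathrm{maxObs}(H;k)$. Set $M_t:=\sum_{u\le t}N_u$. Placing sensors on $A_{r_t}$ fully observes exactly the gadgets with index $\le t$, giving $m_{r_t}\ge M_t$. For the upper bound I would apply Lemma~\ref{lemma bounded observence in grids} to each gadget in turn (taking $G'$ to be the rest of $H$): for any $S$ with $|S|\le s$ a gadget is fully observed only if its prefix affix set lies in $S$, and the longest prefix contained in a size-$k$ set has length at most $k$, so for $r_t\le k<r_{t+1}$ the fully observed gadgets are among $\{1,\dots,t\}$; every other gadget contributes at most $2\ell(4s+2)$ external vertices, a bound independent of the $m^{(t)}$. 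Folding the clique, the $O(s^2)$ leaves, and these partial contributions into a single error term $\varepsilon=O(s^2\ell)$ that does not depend on the $m^{(t)}$, I obtain
\[ M_{t}\le m_{k}\le M_{t}+\varepsilon\quad\text{whenever } r_t\le k<r_{t+1}, \]
and in particular $m_j\le M_{t-1}+\varepsilon$ for every $j<r_t$; strict monotonicity of $m_k$ is Observation~\ref{observation more sensors means more observed}.

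Finally I would feed these estimates into Lemma~\ref{lemma characterization of beta good sizes}. At a non-useful candidate $k$ with $r_t<k<r_{t+1}$, using $j=r_t$ on the left and $j=r_{t+1}$ on the right the estimate places $(k,m_k)$ weakly below the segment joining $(r_t,m_{r_t})$ and $(r_{t+1},m_{r_{t+1}})$ as soon as $N_{t+1}\ge(s+1)\varepsilon$, which gives $\max_{j<k}\tfrac{k-j}{m_k-m_j}\ge\min_{j>k}\tfrac{j-k}{m_j-m_k}$, so $k$ is not useful. At $k=r_t$ with $1\le t\le p-1$, the left-hand maximum is at most $\tfrac{s}{N_t-\varepsilon}$ (each numerator is $\le s$, each denominator is $\ge M_t-(M_{t-1}+\varepsilon)=N_t-\varepsilon$), while the right-hand minimum is at least $\tfrac{1}{2N_{t+1}+\varepsilon}$ (the observance gained beyond $r_t$ is at most $\sum_{u>t}N_u+\varepsilon\le 2N_{t+1}+\varepsilon$ under a geometric decrease). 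Hence it suffices to guarantee
\[ N_t> 2s\,N_{t+1}+(s+1)\varepsilon \quad(1\le t\le p-1)\qquad\text{and}\qquad N_p\ge(s+1)\varepsilon, \]
which hold once the $m^{(t)}$ are chosen to decrease fast enough relative to the fixed quantity $\varepsilon$; this makes $R$ exactly the set of useful sizes.

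The main obstacle is the upper bound on $m_k$: one must rule out that cleverly placing sensors \emph{inside} a gadget, rather than on its affix set, observes a large, $m^{(t)}$-dependent portion of it. This is precisely what Lemma~\ref{lemma bounded observence in grids} prevents, and the extra leaves in the construction exist to make its hypotheses available at every clique vertex; the remaining effort is the routine but careful bookkeeping that keeps the error term $\varepsilon$ genuinely independent of the dominant parameters $m^{(t)}$.
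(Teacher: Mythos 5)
Your proposal is correct and takes essentially the same approach as the paper's proof: the identical construction (a clique of size $s$ with $s+1$ pendant leaves at each vertex, nested affix sets, and cylindrical grid gadgets with $\ell=4s+1$ of rapidly decreasing size, one per element of $R\setminus\{0\}$), the same key tools (Lemma~\ref{lemma bounded observence in grids} for the two-sided bounds on $\mathrm{maxObs}$ and Lemma~\ref{lemma characterization of beta good sizes} together with Observations~\ref{observation beta small or large} and~\ref{observation more sensors means more observed}), and the same overall bounding strategy. The only difference is presentational: the paper pins the gadget sizes down by an explicit top-down recursion and carries out the arithmetic, whereas you defer them to inequalities ($N_t>2sN_{t+1}+(s+1)\varepsilon$, $N_p\geq(s+1)\varepsilon$) verified to be satisfiable at the end, which is a legitimate reorganization of the same argument.
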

        \begin{proof}
            Let $R':=R\setminus \{0\}$.
            Start with a clique $K_s$ and add $s+1$ leaves to every vertex to form a graph $G_0$.
            Let $K\subseteq V(G_0)$ denote the vertices in the clique $K_s$.
            For each $i\in [s]$, let $A_i\subseteq K$ be a set with $|A_i|=i$, such that $A_i\subseteq A_{i'}$ for all $i\leq i'$.
            For each $i\in [s]$, define the parameter
            \[
                x_i:=\begin{cases}
                    2\cdot\left\lceil\displaystyle\frac{63s^4+(4s^2+s) \displaystyle\sum_{t > i}x_{t}}{4s+1}\right\rceil&\text{ if }i\in R'\\
                    0&\text{ if }i\in [s]\setminus R'.
                \end{cases}
            \] 
            Note that $x_i$ depends only on $x_{t}$ for $t > i$, so $x_i$ is well-defined.
            For all $i\in R'$, we can bound $x_i$ below by the following  expressions, which will be useful later in the proof:
            \begin{equation}\label{equation bound on x_i 1}
                x_i>\frac{96s^4+s(4s+1)\displaystyle\sum_{t>i}x_{t}}{4s+1}
            \end{equation} 
            and
            \begin{equation}\label{equation bound on x_i 2}
                x_i\geq \frac{126s^4}{4s+1}>\frac{63s^4+63s^3}{4s+1}.
            \end{equation} 
            For each $i\in R'$, affix the gadget $\boxplus_{4s+1,x_i}^{i}$ to $G_0$ at $A_i$.
            With an eye on applying Lemma~\ref{lemma bounded observence in grids} (with $m:=x_i/2$ and $\ell:=4s+1$), we note that for $i\in R'$, $x_i/2\geq 4s+1$.
            Call the graph that results from affixing all these gadgets $G$.
            For $s \geq 1$, the size of $V(G)$ is the number of vertices in $K_s$, plus each clique vertex's $s+1$ leaves, and for each gadget the number of $L_1, L_2, L_3$, and grid vertices, giving us
            \begin{align}
                |V(G)|&=s+s(s+1)+\sum_{i\in R'} \Big(2(4s+1) + 2(4s+1) + 2(4s+1)i + (4s+1)x_i\Big)\nonumber\\
                &\leq s^2+2s+\sum_{i=1}^{s}\Big(4(4s+1)\Big)+\sum_{i=1}^{s}\Big(2(4s+1)i\Big) + \sum_{i\in R'}\Big((4s+1)x_i\Big)\nonumber\\
                &= s^2+2s+4s(4s+1)+2(4s+1)\left(\frac{s(s+1)}{2}\right) +\sum_{i\in R'}\Big((4s+1)x_i\Big)\nonumber\\
                &\leq 33s^3 + (4s+1)\sum_{i\in R'}x_i\label{equation bound on n}
            \end{align}
            Furthermore, note that $A_s=K$ is a power dominating set of $G$.
            By Lemma~\ref{lemma bounded observence in grids}, no set smaller than $A_s$ can observe the $s$th gadget, so $\gamma_P(G)=s$.
            
            Now fix some $i\in [s]$.
            We will give crude bounds on
                \[ m_i:=\mathrm{maxObs}(G;i). \]
            
            Let $\displaystyle S_i\in \argmax_{S\in\binom{V(G)}{i}} |\mathrm{Obs}(G;S)|$ so that $m_i=|\mathrm{Obs}(G;S_i)|$.
            We have that $|\mathrm{Obs}(G;S_i)\cap V(G_0)|\leq |V(G_0)|=s^2+2s$.
            For each $j\in R'$, if $A_j\not\subseteq S_i$, we apply Lemma~\ref{lemma bounded observence in grids} (with $A:=A_j$, $s:=i$, and $\ell:=4s+1$) to get that $S_i$ observes at most
                \[ 
                2(4s+1)(j+3i+2)\leq 60s^2 
                \]
            vertices in the $j$th gadget.
            Also, if $A_j\subseteq S_i$, $S_i$ observes all of the $j$th gadget, which has at most
                \[ 
                2(4s+1)(j+2)+(4s+1)x_j
                \leq 60s^2+(4s+1)x_j 
                \]
            vertices.
            We note that if $A_j\subseteq S_i$, then we must have $j\leq i$, so considering the $|R'|\leq s$ total gadgets and summing up each of these overestimates, we have that
            \begin{equation}\label{equation upper bound on m_i}
                m_i\leq (s^2+2s)+s(60s^2)+ \sum_{j\leq i} (4s+1)x_j\leq 63s^3+\sum_{j\leq i} (4s+1)x_j.
            \end{equation}
            We also need a lower bound on $m_i$.
            Note that $A_j\subseteq A_i$ whenever $j\leq i$, $j\in R'$, so by Lemma~\ref{lemma bounded observence in grids}, $A_i$ observes the entire $j$th gadget. In particular, $A_i$ observes the grid vertices of all of the gadgets for $j<i$, giving the crude bound of 
            \begin{equation}\label{equation lower bound on m_i}
                m_i\geq |\mathrm{Obs}(G;A_i)|\geq \sum_{j\leq i}(4s+1)x_j=(4s+1)\sum_{j\leq i}x_j,
            \end{equation}
            
            We now focus on showing that the set of useful sizes of $G$ is exactly $R$.
            By Observation~\ref{observation beta small or large}, we have that $0$ and $\gamma_P(G)$ are both useful.
            Towards applying Lemma~\ref{lemma characterization of beta good sizes}, fix $i\in R'$ and $0\leq j<i$.
            We have using~\eqref{equation upper bound on m_i} and~\eqref{equation lower bound on m_i}, along with the fact that $i-j\leq s$, that
                \begin{align*}
                    \frac{i-j}{m_i-m_j} &\leq \frac{s}{(4s+1)\displaystyle\sum_{t\leq i}x_{t}-\left(63s^3+ (4s+1)\sum_{t\leq j} x_{t}\right)}\\
                    &= \frac{s}{(4s+1)x_i - 63s^3 + (4s+1) \displaystyle\sum_{j < t < i}x_t}\\
                    &\leq \frac{s}{(4s+1)x_i-63s^3}.
                \end{align*}
            The final expression above is independent of $j$, so we have that
            \begin{equation}\label{equation upper bound on max}
                \max_{0\leq j<i}\frac{i-j}{m_i-m_j}\leq\frac{s}{(4s+1)x_i-63s^3}.
            \end{equation}
            If instead we consider $j>i$, using that $m_j\leq |V(G)|$ and that $j-i\geq 1$, along with~\eqref{equation bound on n}, and~\eqref{equation lower bound on m_i}, we find
            \begin{align*}
                \frac{j-i}{m_j-m_i} &\geq \frac{1}{|V(G)|-(4s+1)\displaystyle\sum_{t\leq i}x_{t}}\\
                &\geq \frac{1}{33s^3 + (4s+1)\displaystyle\sum_{i\in R'}x_i-(4s+1)\displaystyle\sum_{t\leq i}x_{t}}\\
                &\geq \frac{1}{33s^3 +(4s+1)\displaystyle\sum_{t>i}x_{t}}.
            \end{align*}
            Again, this is independent of $j$ and so 
            \begin{equation}\label{equation lower bound on min}
                \frac{1}{33s^3+(4s+1)\displaystyle\sum_{t>i}x_{t}}\leq \min_{i<j\leq \gamma_P(G)}\frac{j-i}{m_j-m_i}.
            \end{equation}
            Using~\eqref{equation bound on x_i 1} to bound $x_i$ in combination with~\eqref{equation lower bound on min}, and then utilizing~\eqref{equation upper bound on max}, we find
            \begin{align*}
                \max_{0\leq j<i}\frac{i-j}{m_i-m_j} \leq\frac{s}{(4s+1)x_i-63s^3}&<\frac{s}{(4s+1)\left( \frac{96s^4+s(4s+1)\displaystyle\sum_{t>i}x_{t} }{4s+1}\right)-63s^3}\\
                &=\frac{1}{96s^3-63s^2+(4s+1)\displaystyle\sum_{t>i}x_{t}}\\   
                &\leq  \frac{1}{33s^3 +(4s+1)\displaystyle\sum_{t>i}x_{t}}  \\
                &\leq \min_{i<j\leq \gamma_P(G)}\frac{j-i}{m_j-m_i}.
                \end{align*}
            Thus, by Lemma~\ref{lemma characterization of beta good sizes}, the size $i$ is indeed useful, as desired.
            
            Now fix some $i\in [s]\setminus R'$, and let $\underline{j},\overline{j}\in R$ be the largest and smallest values respectively such that $\underline{j}<i<\overline{j}$.
            Then using~\eqref{equation upper bound on m_i} and~\eqref{equation lower bound on m_i}, we calculate
                \[ \frac{i-\underline{j}}{m_i-m_{\underline{j}}}\geq \frac{1}{63s^3+ (4s+1)\displaystyle\sum_{t\leq i} x_t-(4s+1)\sum_{t\leq \underline{j}}x_t}=\frac{1}{63s^3}, \]
            and
                \[ \frac{\overline{j}-i}{m_{\overline{j}}-m_i}\leq \frac{s}{(4s+1)\displaystyle\sum_{t\leq \overline{j}}x_t-\left(63s^3+ (4s+1)\displaystyle\sum_{t\leq i} x_t\right)}=\frac{s}{(4s+1)x_{\overline{j}}-63s^3}. \]
            Then, using~\eqref{equation bound on x_i 2}, we have 
            \begin{align*}
                \min_{i<j\leq \gamma_P(G)}\frac{j-i}{m_j-m_i} &\leq \frac{s}{(4s+1)x_{\overline{j}}-63s^3} \\
            & < \frac{s}{(4s+1)\left(\displaystyle\frac{63s^4+63s^3}{4s+1}\right)-63s^3} \\
            & =\frac{1}{63s^3} \\
            & \leq\max_{0\leq j<i}\frac{i-j}{m_i-m_j}
            \end{align*}
            Thus, by Lemma~\ref{lemma characterization of beta good sizes}, $i$ is not a useful size.
            
        \end{proof}
 
\section{Determining how many PMUs are needed}\label{section marginal cost}
    In this section we will develop the ideas of marginal cost and marginal observance and prove Theorem~\ref{theorem when to use gamma sensors}.
    
    \subsection{Marginal cost and marginal observance}
        Given a current placement of sensors which do not suffice to observe the entire graph, it is natural to ask about the cost-benefit analysis for purchasing one more sensor.
        
        Given a graph $G$ and an integer $k\in [\gamma_P(G)]$, we define the \emph{marginal observance}, 
            \[ \mathrm{MObs}(G;k):=\mathrm{maxObs}(G;k)-\mathrm{maxObs}(G;k-1) \]
        and given a cost-ratio $\beta\in\mathbb{R}_{\geq 0}$ the \emph{marginal cost}
            \[ \textrm{MC}(G;k,\beta):=1-\beta\cdot\textrm{MObs}(G;k). \]
        
        It is worth noting that if we fix $G$, $\mathrm{MObs}(G;k)$ is strictly increasing in $k$.
        Furthermore, there is a telescopic relationship between the values of $\mathrm{MObs}$ and $\mathrm{maxObs}$.
        Indeed, for any $k\in [\gamma_P(G)]$, we have
        \begin{equation}\label{equation max obs from marginal obs}
            \mathrm{maxObs}(G;k)=\sum_{i=1}^k \mathrm{MObs}(G;i).
        \end{equation}
        
        Similarly, for the cost function we have
            \[ \mathrm{C}(G;k,\beta)=\sum_{i=1}^k \mathrm{MC}(G;i,\beta). \]
        
        Marginal observance can tell us when certain sizes are useful or not useful.
        The following observation characterizes when $i-1$ sensors are better than $i$ sensors, based on marginal observance and $\beta$.
        
        \begin{proposition}\label{observation marginal cost beta bound on when i-1 is better than i}
            Let $G$ be a graph and $i\in [\gamma_P(G)]$.
            Then $\mathrm{C}(G;i-1,\beta)\leq\mathrm{C}(G;i,\beta)$ if and only if $\beta\leq \frac{1}{\mathrm{MObs}(G;i)}$.
            Furthermore, if one inequality is strict, then so is the other. 
        \end{proposition}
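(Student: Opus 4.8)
The plan is to prove the equivalence by a direct computation of the difference of the two cost functions. Using the definition $\mathrm{C}(G;k,\beta)=k+\beta(|V(G)|-\mathrm{maxObs}(G;k))$, the two copies of $\beta|V(G)|$ cancel and the linear-in-$k$ terms contribute $-1$, so that
\[
\mathrm{C}(G;i-1,\beta)-\mathrm{C}(G;i,\beta)=-1+\beta\bigl(\mathrm{maxObs}(G;i)-\mathrm{maxObs}(G;i-1)\bigr)=-1+\beta\cdot\mathrm{MObs}(G;i),
\]
where the last equality is just the definition of $\mathrm{MObs}(G;i)$.

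From this identity, the inequality $\mathrm{C}(G;i-1,\beta)\leq\mathrm{C}(G;i,\beta)$ is equivalent to $-1+\beta\cdot\mathrm{MObs}(G;i)\leq 0$, i.e.\ to $\beta\cdot\mathrm{MObs}(G;i)\leq 1$. To rewrite this as $\beta\leq 1/\mathrm{MObs}(G;i)$, I need $\mathrm{MObs}(G;i)>0$ so that dividing by it is legitimate and preserves the direction of the inequality. Since $i\in[\gamma_P(G)]$, Observation~\ref{observation more sensors means more observed} applies with $i-1<i$ to give $\mathrm{maxObs}(G;i-1)<\mathrm{maxObs}(G;i)$, hence $\mathrm{MObs}(G;i)>0$. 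Confirming this positivity is the only step that requires any care, and it is handled entirely by the earlier observation.

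The ``strict implies strict'' addendum then follows from the same chain of equivalences: every step above is an equivalence, and multiplying or dividing by the strictly positive quantity $\mathrm{MObs}(G;i)$ preserves strict inequalities, so $\mathrm{C}(G;i-1,\beta)<\mathrm{C}(G;i,\beta)$ holds exactly when $\beta<1/\mathrm{MObs}(G;i)$. I do not anticipate any genuine obstacle in this argument; it is essentially a one-line algebraic rearrangement whose only nontrivial input is the strict monotonicity of $\mathrm{maxObs}$ from Observation~\ref{observation more sensors means more observed}.
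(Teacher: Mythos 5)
Your proof is correct and follows essentially the same route as the paper: both compute the difference $\mathrm{C}(G;i,\beta)-\mathrm{C}(G;i-1,\beta)=1-\beta\cdot\mathrm{MObs}(G;i)$ and read off the equivalence, including the strict case. Your explicit appeal to Observation~\ref{observation more sensors means more observed} to justify $\mathrm{MObs}(G;i)>0$ before dividing is a point the paper leaves implicit, but it is the same argument.
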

        \begin{proof}
            We can directly calculate that
                \[ \mathrm{C}(G;i,\beta)=\mathrm{C}(G;i-1,\beta)+\mathrm{MC}(G;i)=\mathrm{C}(G;i-1,\beta)+1-\beta\cdot\mathrm{MObs}(G;i). \]
            Thus, $\mathrm{C}(G;i-1,\beta)\leq \mathrm{C}(G;i,\beta)$ if and only if $\beta\leq \frac{1}{\mathrm{MObs}(G;i)}$, and if one inequality is strict, so is the other.
        \end{proof}
        
        Note, Proposition~\ref{observation marginal cost beta bound on when i-1 is better than i} also implies $\mathrm{C}(G;i-1,\beta)\geq\mathrm{C}(G;i,\beta)$ if and only if $\beta\geq\frac{1}{\mathrm{MObs}(G;i)}$.
        We can also utilize this result to rule out a size $i$ from being useful.
        \begin{proposition}\label{proposition marginal obs implies not useful}
            Let $G$ be a graph and $i\in [\gamma_P(G)-1]$.
            If $\mathrm{MObs}(G;i)\leq \mathrm{MObs}(G;i+1)$, then $i$ is not a useful size for $G$.
        \end{proposition}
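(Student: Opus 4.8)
The plan is to argue that the hypothesis forces the interval of $\beta$ on which a size-$i$ placement could possibly be optimal to collapse to (at most) a single point, so no useful set of size $i$ can exist. The key observation is that a useful size $i$ requires a useful \emph{set} of size $i$, and any such set is $\beta$-best on a non-degenerate interval; in particular it must achieve $\mathrm{maxObs}(G;i)$, so it suffices to show that the cost function $\mathrm{C}(G;i,\beta)$ is never the pointwise minimum over a non-degenerate interval of $\beta$. For that, it is enough to compare size $i$ against its two neighbors $i-1$ and $i+1$ (both valid since $i\in[\gamma_P(G)-1]$).

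First I would invoke Proposition~\ref{observation marginal cost beta bound on when i-1 is better than i} and the note following it. Applied at index $i$, it gives that $\mathrm{C}(G;i,\beta)\leq \mathrm{C}(G;i-1,\beta)$ if and only if $\beta\geq \frac{1}{\mathrm{MObs}(G;i)}$. Applied at index $i+1$ (that is, comparing $i+1$ with its predecessor $i$), it gives that $\mathrm{C}(G;i,\beta)\leq \mathrm{C}(G;i+1,\beta)$ if and only if $\beta\leq \frac{1}{\mathrm{MObs}(G;i+1)}$. Hence if a size-$i$ placement is to have cost no larger than both the size-$(i-1)$ and size-$(i+1)$ placements, then $\beta$ must satisfy
\[
\frac{1}{\mathrm{MObs}(G;i)}\leq \beta\leq \frac{1}{\mathrm{MObs}(G;i+1)}.
\]

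The heart of the argument is the reciprocal step: the hypothesis $\mathrm{MObs}(G;i)\leq \mathrm{MObs}(G;i+1)$ (with both quantities positive by Observation~\ref{observation more sensors means more observed}) flips under inversion to $\frac{1}{\mathrm{MObs}(G;i)}\geq \frac{1}{\mathrm{MObs}(G;i+1)}$. Thus the displayed interval for $\beta$ has left endpoint at least its right endpoint, so it is either empty or the single point $\frac{1}{\mathrm{MObs}(G;i)}=\frac{1}{\mathrm{MObs}(G;i+1)}$. In either case there is no non-degenerate interval on which size $i$ beats both neighbors, let alone all other sizes, so no set of size $i$ is useful.

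I do not anticipate a serious obstacle here, since the whole argument is a two-inequality comparison; the only point requiring care is bookkeeping the direction of the inequalities (particularly the orientation flip induced by taking reciprocals, and correctly reading Proposition~\ref{observation marginal cost beta bound on when i-1 is better than i} once with index $i$ and once with index $i+1$). As a cross-check, the same conclusion can be reached directly from Lemma~\ref{lemma characterization of beta good sizes}: restricting the maximum to $j=i-1$ and the minimum to $j=i+1$ yields $\max_{0\leq j<i}\frac{i-j}{m_i-m_j}\geq \frac{1}{\mathrm{MObs}(G;i)}\geq \frac{1}{\mathrm{MObs}(G;i+1)}\geq \min_{i<j\leq \gamma_P(G)}\frac{j-i}{m_j-m_i}$, so the strict inequality characterizing usefulness fails.
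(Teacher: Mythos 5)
Your proposal is correct and takes essentially the same route as the paper's proof: both compare size $i$ only against its neighbors $i-1$ and $i+1$ via Proposition~\ref{observation marginal cost beta bound on when i-1 is better than i}, deduce that any $\beta$ for which size $i$ is optimal must satisfy $\frac{1}{\mathrm{MObs}(G;i)}\leq\beta\leq\frac{1}{\mathrm{MObs}(G;i+1)}$, and observe that the hypothesis collapses this interval to at most a single point, ruling out a non-degenerate interval of $\beta$-bestness. The paper merely phrases this as a contrapositive, which is a cosmetic difference.
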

        \begin{proof}
        	We will proceed via the contrapositive. If $i$ is useful, then there is a range of $\beta$ such that $\mathrm{C}(G;i-1,\beta)\geq\mathrm{C}(G;i,\beta)$ and $\mathrm{C}(G;i+1,\beta)\geq\mathrm{C}(G;i,\beta)$. Using Proposition~\ref{observation marginal cost beta bound on when i-1 is better than i}, this gives us that
        	\[
        	\frac{1}{\mathrm{MObs}(G;i)}\leq \beta\leq \frac{1}{\mathrm{MObs}(G;i+1)}.
        	\]
        	If we had $\frac{1}{\mathrm{MObs}(G;i)}=\frac{1}{\mathrm{MObs}(G;i+1)}$, then we would have that a set of size $i$ is $\beta$-best only at a single value of $\beta$, so $i$ is not useful, and thus we must have $\frac{1}{\mathrm{MObs}(G;i)}< \frac{1}{\mathrm{MObs}(G;i+1)}$, implying that $\mathrm{MObs}(G;i)>\mathrm{MObs}(G;i+1)$.
        \end{proof}
        
        The converse of Proposition~\ref{proposition marginal obs implies not useful} is not true in general; if $\mathrm{MObs}(G;i)>\mathrm{MObs}(G;i+1)$, we can say that there is an interval for $\beta$ for which the cost for $i$ sensors is lower than the cost for $i-1$ or $i+1$ sensors, but it is possible that some other number of sensors has even lower cost on this interval.
        However, if the marginal observances are strictly decreasing, every size is useful.
        \begin{proposition}
            Let $G$ be a graph.
            If $\mathrm{MObs}(G;\ell)>\mathrm{MObs}(G;\ell+1)$ for all $\ell\in [\gamma_P(G)-1]$, then every size in $[\gamma_P(G)]\cup \{0\}$ is useful.
        \end{proposition}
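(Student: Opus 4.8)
The plan is to exhibit, for each intermediate size $i$, an explicit non-degenerate interval of $\beta$ on which a maximum-observance set of size $i$ is $\beta$-best. The key reformulation is that the hypothesis $\mathrm{MObs}(G;\ell)>\mathrm{MObs}(G;\ell+1)$ for all $\ell\in[\gamma_P(G)-1]$ is equivalent, after taking reciprocals, to the statement that the thresholds $t_\ell:=1/\mathrm{MObs}(G;\ell)$ are \emph{strictly increasing}, i.e.\ $t_1<t_2<\cdots<t_{\gamma_P(G)}$. These thresholds are precisely the break-points supplied by Proposition~\ref{observation marginal cost beta bound on when i-1 is better than i}.

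First I would dispatch the two extreme sizes: $0$ and $\gamma_P(G)$ are useful by Observation~\ref{observation beta small or large}. Then I fix $i\in[\gamma_P(G)-1]$ and choose any $\beta$ in the open interval $(t_i,t_{i+1})$, which is nonempty precisely because $t_i<t_{i+1}$. The core of the argument is a monotonicity sweep across the sizes. For every $\ell\le i$ we have $\beta>t_i\ge t_\ell$, so Proposition~\ref{observation marginal cost beta bound on when i-1 is better than i} gives the strict inequality $\mathrm{C}(G;\ell-1,\beta)>\mathrm{C}(G;\ell,\beta)$; chaining these yields $\mathrm{C}(G;0,\beta)>\cdots>\mathrm{C}(G;i,\beta)$. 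Symmetrically, for every $\ell\ge i+1$ we have $\beta<t_{i+1}\le t_\ell$, giving $\mathrm{C}(G;\ell-1,\beta)<\mathrm{C}(G;\ell,\beta)$ and hence $\mathrm{C}(G;i,\beta)<\cdots<\mathrm{C}(G;\gamma_P(G),\beta)$. Thus $\mathrm{C}(G;i,\beta)$ is the strict minimum among all sizes $k\in[\gamma_P(G)]\cup\{0\}$.

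To conclude that $i$ is useful I would note that the minimum cost over all subsets equals $\min_k \mathrm{C}(G;k,\beta)$: any set of size $k>\gamma_P(G)$ has cost at least $k>\gamma_P(G)=\mathrm{C}(G;\gamma_P(G),\beta)$ and so never improves on this, while a set realizing $\mathrm{maxObs}(G;i)$ attains $\mathrm{C}(G;i,\beta)$. Hence such a set is $\beta$-best. Since this holds for every $\beta\in(t_i,t_{i+1})$, the size $i$ is $\beta$-best on a non-degenerate interval, i.e.\ useful, completing the range $[\gamma_P(G)]\cup\{0\}$.

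The argument is short because all of the real work is already packaged into Proposition~\ref{observation marginal cost beta bound on when i-1 is better than i}; the only points to handle with care are the direction of the inequalities when passing between $\mathrm{MObs}$ and its reciprocal, and the verification that $(t_i,t_{i+1})$ is genuinely non-degenerate, both of which follow immediately from the strict monotonicity hypothesis. An alternative route would apply Lemma~\ref{lemma characterization of beta good sizes} directly, showing by an averaging argument that the maximum over $j<i$ is attained at $j=i-1$ (value $t_i$) and the minimum over $j>i$ at $j=i+1$ (value $t_{i+1}$); the main obstacle there is establishing this ``adjacent index'' claim, which is why I prefer the marginal-cost telescoping formulation, where it is automatic.
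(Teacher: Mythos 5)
Your proof is correct, but it takes a genuinely different route from the paper's. The paper proves this proposition by invoking Lemma~\ref{lemma characterization of beta good sizes}: it bounds $\max_{0\leq j<i}\frac{i-j}{\mathrm{maxObs}(G;i)-\mathrm{maxObs}(G;j)}$ above by $\frac{1}{\mathrm{MObs}(G;i)}$ using the generalized mediant inequality applied to the telescoping sum $\mathrm{maxObs}(G;i)-\mathrm{maxObs}(G;j)=\sum_{\ell=j+1}^{i}\mathrm{MObs}(G;\ell)$ together with the decreasing hypothesis, bounds the corresponding minimum below by $\frac{1}{\mathrm{MObs}(G;i+1)}$ symmetrically, and concludes from the strict gap between these two reciprocals. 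You instead bypass that lemma entirely: you fix $\beta$ in the explicit interval $\left(\frac{1}{\mathrm{MObs}(G;i)},\frac{1}{\mathrm{MObs}(G;i+1)}\right)$ and chain strict adjacent comparisons from Proposition~\ref{observation marginal cost beta bound on when i-1 is better than i} on both sides of $i$, which is exactly the ``adjacent index'' reduction that the paper's mediant-inequality step accomplishes in aggregated form. Each approach has a merit: the paper's stays inside the framework of Lemma~\ref{lemma characterization of beta good sizes}, which already packages the passage from ``best among sizes $0,\dots,\gamma_P(G)$'' to ``$\beta$-best among all subsets''; your version is more elementary and more informative, since it exhibits the actual interval on which size $i$ wins, but for that reason you correctly had to supply the extra observation that sets of size exceeding $\gamma_P(G)$ can never beat a minimum power dominating set (their cost already exceeds $\gamma_P(G)$), and that a single maximum-observance set of size $i$ serves as the $\beta$-best witness across the whole interval, so the size $i$ is useful under the paper's definition. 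Both arguments rest on the same monotone threshold structure $t_\ell=1/\mathrm{MObs}(G;\ell)$, so the difference is one of packaging rather than substance, but your packaging is not the paper's.
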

        \begin{proof}
            By Observation~\ref{observation beta small or large}, we have that $0$ and $\gamma_P(G)$ are both useful sizes.
            Now, fix some $i\in [\gamma_P(G)-1]$.
            Towards applying Lemma~\ref{lemma characterization of beta good sizes}, we calculate
            \begin{equation}\label{equation decreasing marginal obs max of sizes below}
                \max_{0\leq j<i}\frac{i-j}{\mathrm{maxObs}(G;i)-\mathrm{maxObs}(G;j)}=\max_{0\leq j<i}\frac{i-j}{\sum_{\ell=j+1}^i\mathrm{MObs}(G;\ell)}.
            \end{equation}
            For a fixed $j$ with $0\leq j<i$, using the generalized mediant inequality, we can write
                \[ \frac{i-j}{\sum_{\ell=j+1}^i\mathrm{MObs}(G;\ell)}=\frac{\sum_{\ell=j+1}^i1}{\sum_{\ell=j+1}^i\mathrm{MObs}(G;\ell)}\leq \max_{j+1\leq \ell\leq i}\frac{1}{\mathrm{MObs}(G;\ell)}=\frac{1}{\mathrm{MObs}(G;i)}, \]
            where on the last equality we use that $\mathrm{MObs}(G;\ell)$ is decreasing.
            Thus, returning to~\eqref{equation decreasing marginal obs max of sizes below}, we have
                \[ 
                \max_{0\leq j<i}\frac{i-j}{\mathrm{maxObs}(G;i)-\mathrm{maxObs}(G;j)} \leq \frac{1}{\mathrm{MObs}(G;i)}. 
                \]
            By essentially the same argument, we have that
                \[ 
                \min_{i<j\leq \gamma_P(G)}\frac{j-i}{\mathrm{maxObs}(G;j)-\mathrm{maxObs}(G;i)} \geq \frac{1}{\mathrm{MObs}(G;i+1)}. 
                \]
            Since $\frac{1}{\mathrm{MObs}(G;i)}<\frac{1}{\mathrm{MObs}(G;i+1)}$, size $i$ is useful by Lemma~\ref{lemma characterization of beta good sizes}.
        \end{proof}
    
    \subsection{Marginal observance and minimum fort size}
        The results in this section will be useful for the proof of Theorem~\ref{theorem when to use gamma sensors}, but they may also be of independent interest.
        The result below is stated for $\underline{f}(G)\geq 2$ below; it is worth noting that this condition is equivalent to $G$ having no isolated vertices.
        \begin{lemma}\label{lemma marginal observation with f=2}
            Let $G$ be a graph with $\underline{f}(G)\geq 2$. For all $i\in [\gamma_P(G)]$, 
                $ \mathrm{MObs}(G;i)\geq 2. $
        \end{lemma}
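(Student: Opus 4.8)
The plan is to prove the equivalent inequality $\mathrm{maxObs}(G;i)\geq \mathrm{maxObs}(G;i-1)+2$ for every $i\in[\gamma_P(G)]$; since $\mathrm{MObs}(G;i)=\mathrm{maxObs}(G;i)-\mathrm{maxObs}(G;i-1)$ by definition, this is exactly the claim. To get it, I would fix an optimal placement $S$ of size $i-1$, so that $|\mathrm{Obs}(G;S)|=\mathrm{maxObs}(G;i-1)$, and exhibit a single vertex $v\notin S$ for which $S\cup\{v\}$ (a set of size $i$) observes at least two vertices that $S$ does not. Writing $B:=\mathrm{Obs}(G;S)$ and $F:=V(G)\setminus B$, the target reduces to finding $v\notin S$ with $|N[v]\cap F|\geq 2$: placing an extra sensor at such a $v$ can only enlarge the observed set and additionally dominates all of $N[v]$, hence observes at least two previously unobserved vertices of $F$, giving $\mathrm{maxObs}(G;i)\geq |B|+2$.

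The first key step is to recognize that $F=V(G)\setminus B$ is a fort. The power domination process terminates precisely when no vertex of $B$ has exactly one neighbor outside $B$, which is verbatim the defining condition of a fort applied to $F$. Because $i-1<\gamma_P(G)$, the set $S$ is not power dominating, so $F\neq\emptyset$; and since $\underline{f}(G)\geq 2$, every nonempty fort has at least two vertices, so $|F|\geq 2$. (This alone reproves $\mathrm{MObs}(G;i)\geq 1$ in the spirit of Observation~\ref{observation more sensors means more observed}, but we need the stronger bound.) Note also that $S\subseteq N[S]\subseteq B$, so any vertex of $F$ is automatically outside $S$.

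The heart of the argument is a two-case search for the desired $v$. If $F$ is not independent, I would pick an edge $vw$ with both endpoints in $F$ and take this $v$; then $v\in F$ gives $v\notin S$, and $\{v,w\}\subseteq N[v]\cap F$, so $|N[v]\cap F|\geq 2$. If instead $F$ is independent, then every vertex of $F$ has all its neighbors in $B$; since $G$ has no isolated vertices (equivalently $\underline{f}(G)\geq 2$), each $u\in F$ is adjacent to some entrance vertex $x\in\mathfrak{e}(F)$. The fort property forces any entrance vertex to have at least two neighbors in $F$ (a single neighbor would violate the fort condition), so $|N(x)\cap F|\geq 2$. Finally $x\notin S$, for otherwise $u\in N[x]$ would have been dominated and hence observed, contradicting $u\in F$. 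Taking $v:=x$ finishes this case.

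I expect the main obstacle to be precisely the bookkeeping in the independent case: it is not enough to know that some vertex with two neighbors in $F$ exists, one must certify that it can be chosen \emph{outside} the already-optimal set $S$. The resolution above handles this cleanly, and it is the only place, beyond guaranteeing $|F|\geq 2$, where the hypothesis $\underline{f}(G)\geq 2$ (no isolated vertices) is actually used.
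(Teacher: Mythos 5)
Your proposal is correct and takes essentially the same approach as the paper: fix an optimal set $S$ of size $i-1$, note that $V(G)\setminus \mathrm{Obs}(G;S)$ is a nonempty fort, and exhibit a single vertex (either one inside the fort with a fort-neighbor, or an entrance vertex, which by the fort property has at least two neighbors in the fort) whose addition observes at least two new vertices. The only cosmetic difference is the case split --- you split on whether the unobserved set is independent, while the paper splits on whether some connected component is entirely unobserved --- and your write-up is in fact slightly more careful, since it explicitly verifies that the chosen vertex lies outside $S$.
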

        \begin{proof}
            Let $S\in \displaystyle\argmax_{S'\in\binom{V(G)}{i-1}} |\mathrm{Obs}(G;S')|$.
            We will demonstrate a vertex $x$ such that $|\mathrm{Obs}(G;S\cup \{x\})|$ is at least two larger than $|\mathrm{Obs}(G;S)|$.
            
            If there exists a connected component $C$ of $G$ with $V(C)\cap \mathrm{Obs}(G;S)=\emptyset$, then for any $x\in V(C)$, we have that $x$ observes itself and at least one neighbor, as there are no isolated vertices.
            
            If on the other hand, no such connected component exists, then there must be an unobserved fort in $V(G)\setminus \mathrm{Obs}(G;S)$ with a non-empty entrance.
            Let $x$ be in the entrance of this fort.
            Note that $x$ has at least two neighbors in the fort that were unobserved by $S$, but are observed by $S\cup \{x\}$.

            In both cases, we have determined some vertex $x$ such that $|\mathrm{Obs}(G;S\cup \{x\})|$ is at least two larger than $|\mathrm{Obs}(G;S)|$.
        \end{proof}
        
        We can refine Lemma~\ref{lemma marginal observation with f=2} if we know there are no forts of size $2$ in $G$.
        Before we do so, we need a characterization of forts of size $2$.
        Two vertices $x,y\in V(G)$ are \emph{twins} if either $N(x)=N(y)$ or $N[x]=N[y]$.
        The following synthesizes folklore results on characterizing small forts.        \begin{lemma}\label{lemma fort size 3 characterization}
            Let $G$ be a graph.
            $\underline{f}(G)\geq 3$ if and only if both $G$ does not contain any isolated vertices and $G$ does not contain any twins.
        \end{lemma}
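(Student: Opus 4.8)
The plan is to prove both directions at once by characterizing exactly which singletons and pairs of vertices form forts. Since $\underline{f}(G)\geq 3$ means (taking forts to be nonempty) precisely that $G$ has no fort of size $1$ and no fort of size $2$, it suffices to establish two equivalences: a fort of size $1$ exists if and only if $G$ has an isolated vertex, and a fort of size $2$ exists if and only if $G$ has a pair of twins. Combining these yields the claim.

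First I would dispose of the size-$1$ case. By definition, $\{v\}$ fails to be a fort exactly when some $x\neq v$ has exactly one neighbor in $\{v\}$; but a vertex has at most one neighbor in a singleton, so this just says $x\sim v$. Hence $\{v\}$ is a fort if and only if $v$ has no neighbor, i.e.\ $v$ is isolated. This gives ``$G$ has a fort of size $1$'' $\iff$ ``$G$ has an isolated vertex.''

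Next I would treat the size-$2$ case, which is where the care is required. By the fort definition, $\{u,v\}$ is a fort if and only if every $x\notin\{u,v\}$ is adjacent to both of $u,v$ or to neither---equivalently, no such $x$ lies in the symmetric difference of the two neighborhoods restricted to $V(G)\setminus\{u,v\}$. I would split on whether $u\sim v$. If $u\not\sim v$, then neither vertex lies in the other's open neighborhood, so the fort condition reduces to $N(u)=N(v)$, the false-twin condition. If $u\sim v$, then the bookkeeping must account for the edge $uv$ itself: the condition becomes $N(u)\setminus\{v\}=N(v)\setminus\{u\}$, which upon adjoining $u$ and $v$ is exactly $N[u]=N[v]$, the true-twin condition. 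In either case $\{u,v\}$ is a fort if and only if $u,v$ are twins, giving ``$G$ has a fort of size $2$'' $\iff$ ``$G$ has twins.''

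Putting the two characterizations together, $\underline{f}(G)\geq 3$ holds exactly when $G$ contains neither an isolated vertex nor a pair of twins. The only delicate point is the adjacent (true-twin) subcase, where one must correctly decide whether $u$ and $v$ count as neighbors of each other when comparing their neighborhoods; keeping open versus closed neighborhoods straight there is the crux, and everything else is immediate from the definitions.
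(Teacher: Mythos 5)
Your proof is correct and takes essentially the same approach as the paper's: both arguments reduce the claim to characterizing forts of size $1$ as exactly the isolated vertices and forts of size $2$ as exactly the twin pairs. Your explicit case split on whether the two vertices of a size-$2$ fort are adjacent (matching the $N(u)=N(v)$ versus $N[u]=N[v]$ conditions) merely spells out a detail the paper's proof leaves implicit.
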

        \begin{proof}
            For the forward direction, an isolated vertex is a fort of size $1$ and twins constitute a fort of size $2$.
        
            For the reverse direction, assume $G$ is a graph with no isolated vertices and no twins.
            The only possible fort of size $1$ are isolated vertices.
            Similarly if $F$ was a fort of size $2$ in $G$, then every vertex $x\not\in F$ would necessarily be adjacent to either both vertices in $F$ or neither vertex in $F$, so the vertices in $F$ would be twins.
            Thus $\underline{f}(G)\geq 3$.
        \end{proof}
        \begin{lemma}\label{lemma marginal obs with f geq 3}
            Let $G$ be a graph with $\underline{f}(G)\geq 3$. Then $ \mathrm{MObs}(G;i)\geq 3$ for all $i\in [\gamma_P(G)]$.
        \end{lemma}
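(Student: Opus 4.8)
The plan is to mirror the proof of Lemma~\ref{lemma marginal observation with f=2}, but to extract one additional forced vertex using the stronger hypothesis. Fix $i \in [\gamma_P(G)]$ and let $S$ be a set of size $i-1$ achieving $|\mathrm{Obs}(G;S)| = \mathrm{maxObs}(G;i-1)$; write $O := \mathrm{Obs}(G;S)$ and $F := V(G)\setminus O$. Since $i-1 < \gamma_P(G)$ we have $O \neq V(G)$, so $F \neq \emptyset$, and because the forcing process has terminated no vertex of $O$ has exactly one neighbour in $F$; hence $F$ is a fort and $|F| \geq \underline{f}(G) \geq 3$. By Lemma~\ref{lemma fort size 3 characterization}, $G$ has no isolated vertices and no twins. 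It then suffices to exhibit a single vertex $x \notin S$ with $|\mathrm{Obs}(G;S\cup\{x\})| \geq |O| + 3$, since then $\mathrm{maxObs}(G;i) \geq |O|+3 = \mathrm{maxObs}(G;i-1)+3$ and so $\mathrm{MObs}(G;i) \geq 3$.

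First I would dispose of the easy case: if some $w \in F$ has at least two neighbours inside $F$, then placing a sensor at $w$ observes $w$ together with those two $F$-neighbours, three previously unobserved vertices, and we are done. So the crux is when $G[F]$ has maximum degree at most $1$, i.e. $G[F]$ is a matching. In that case I would first note $\mathfrak{e}(F) \neq \emptyset$: otherwise $F$ would be a union of connected components of $G$, each a single vertex or a single edge, producing an isolated vertex or a pair of twins and contradicting $\underline{f}(G)\geq 3$.

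The heart of the argument is the matching case with nonempty entrance. Pick $x \in \mathfrak{e}(F)$; since $F$ is a fort, $x$ has at least two neighbours in $F$, and if it has three we finish immediately, so assume $x$ has exactly two $F$-neighbours $a,b$. Placing a sensor at $x$ observes $a$ and $b$, and I would argue that if this gains only these two vertices (no further forcing) then $a$ and $b$ are twins, a contradiction. Concretely, once $a,b$ are observed: (i) if $a$ had a matching partner in $F$ other than $b$, that partner would be the unique unobserved neighbour of $a$ and would be forced, giving a third vertex; so each of $a,b$ is either matched to the other or has no $F$-neighbour at all; and (ii) any entrance vertex adjacent to $a$ whose second $F$-neighbour differs from $b$ would force that neighbour, so every entrance neighbour of $a$, and symmetrically of $b$, has $F$-neighbourhood exactly $\{a,b\}$. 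Setting $M := \{v \in \mathfrak{e}(F) : N(v)\cap F = \{a,b\}\}$, this shows the neighbourhoods of $a$ and $b$ agree off $\{a,b\}$ and both equal $M$, so $a,b$ are open twins when $ab\notin E(G)$ and closed twins when $ab\in E(G)$. Either way we contradict Lemma~\ref{lemma fort size 3 characterization}, so the gain is at least $3$.

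The main obstacle is exactly this last step: controlling the zero-forcing cascade after adding the sensor, since a priori many vertices could force in a complicated chain. The key simplification is that $G[F]$ being a matching limits each $F$-vertex to at most one $F$-neighbour and (in the minimal case) each entrance vertex to exactly two $F$-neighbours, which makes the ``first force'' analysis tractable and funnels every failure mode into the twin structure forbidden by the hypothesis. I would also verify the routine bookkeeping, namely that $x \notin S$ (true since $x$ is adjacent to unobserved vertices while $S \subseteq O$) and that $\mathrm{Obs}(G;S\cup\{x\}) = C_G(O\cup\{a,b\})$ by idempotence of the closure, but these present no difficulty.
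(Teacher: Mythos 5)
Your overall strategy---reduce to the case where $G[F]$ is sparse, focus on an entrance vertex with exactly two fort-neighbours $a,b$, and use the no-twins consequence of $\underline{f}(G)\geq 3$ (Lemma~\ref{lemma fort size 3 characterization}) to extract a third observed vertex---is the same as the paper's, but your step (ii) has a genuine gap. There you speak of ``any entrance vertex adjacent to $a$ whose second $F$-neighbour differs from $b$,'' implicitly treating every entrance vertex near $a$ or $b$ as having \emph{exactly} two neighbours in $F$. You only established that for your chosen vertex $x$; the fort property gives other entrance vertices \emph{at least} two $F$-neighbours, not exactly two. Concretely, suppose some entrance vertex $v$ adjacent to $a$ has $N(v)\cap F=\{a,c,d\}$ with $c,d\notin\{a,b\}$ and $v\not\sim b$. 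After $a$ and $b$ become observed, $v$ still has two unobserved neighbours and forces nothing, it does not lie in your set $M$, and it witnesses $N(a)\neq N(b)$. So in this configuration the sensor at $x$ really does gain only two vertices while $a$ and $b$ are not twins: your key implication ``no further forcing $\Rightarrow$ $a,b$ are twins'' is false as stated, and the contradiction you are aiming for never materialises.

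The lemma itself survives because such a $v$ is a better sensor location: it has three unobserved vertices in its closed neighbourhood, so placing the extra sensor at $v$ gains at least $3$ already in the domination step. This is exactly how the paper avoids the issue: its first case is ``if \emph{any} vertex $v\in V(G)$ has at least three unobserved vertices in its closed neighbourhood, choose $w=v$,'' which simultaneously disposes of fort vertices with two fort-neighbours \emph{and} entrance vertices with three or more; only then does it analyse the remaining, now genuinely rigid, structure. Your proof is repaired by the same move: before picking $x$, dispose of that case globally. Afterwards every entrance vertex has exactly two $F$-neighbours and $G[F]$ is a matching, and your steps (i) and (ii) then do go through and deliver the twin contradiction. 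With that one extra reduction your argument is correct and essentially coincides with the paper's proof (the paper argues constructively---using non-twinness of the two unobserved neighbours of an entrance vertex $z$ to produce a distinguishing vertex $y$ and placing the sensor at $z$ or at $y$ accordingly---whereas you argue by contraposition, but the content is the same).
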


        \begin{proof}
            Let $S\in \displaystyle\argmax_{S'\in\binom{V(G)}{i-1}} |\mathrm{Obs}(G;S')|$.
            We will demonstrate a vertex $w$ such that $\mathrm{Obs}(G;S\cup \{w\})$ is at least three larger than $|\mathrm{Obs}(G;S)|$.
            Let $X:=V(G)\setminus \mathrm{Obs}(G;S)$.
            
            If any vertex $v$ in $G$ satisfies $N[v]\cap X\geq 3$, then we can choose $w=v$. 

            Now, assume that for all $v\in V(G)$ that $|N[v]\cap X| < 3$.
            Note that this means that $X$ intersects nontrivially with every connected component of $G$: the condition $\underline{f}(G)\geq 3$ implies that $G[X]$ can neither contain $K_1$ nor $K_2$ and the $|N[v]\cap X| <3$ condition prohibits any connected component of $G$ entirely contained in $X$ with 3 or more vertices.
            
            Thus, there exists a vertex $x_1\in X$ with a neighbor $z\in \mathrm{Obs}(G;S)$.
            As $x_1$ was not observed, $z$ must have a second neighbor in $X$, say $x_2$. As $|N[z]\cap X| < 3$, $z$ has no other neighbors in $X$. 
            Moreover, note that $x_1$ and $x_2$ are not twins by Lemma~\ref{lemma fort size 3 characterization}.
            Without loss of generality, we may assume there is some $y\in N(x_1)\setminus N(x_2)$. 

            If $y\in X$, then $N[x_1]\cap X=\{x_1,y\}$ since $|N[x_1]\cap X| < 3$.
            Choose $w=z$.
            In the domination step, $x_1$ and $x_2$ are observed and then $x_1$ observes $y$ by a zero forcing step.

            If $y\not\in X$, then as $x_1$ is unobserved, $y$ must have a second neighbor, $x_3 \in X$.
            Note $N[y]\cap X = \{x_1,x_3\}$.
            Choose $w=y$.
            Then $y$ observes $x_1,x_3$ in the domination step and then $z$ observes $x_2$ by a zero forcing step.
            
            In each case, we have a choice of vertex $w$ such that
                \[ 
                |\mathrm{Obs}(G;S\cup \{w\})|\geq |\mathrm{Obs}(G;S)|+3. \qedhere
                \]
        \end{proof}
        
        Based on the statements of Lemma~\ref{lemma marginal observation with f=2} and Lemma~\ref{lemma marginal obs with f geq 3}, one might ask if in general for $k\in\mathbb{N}$, is it is the case that $\underline{f}(G)\geq k$ implies that $\mathrm{MObs}(G;i)\geq k$ for all $i\in [\gamma_P(G)]$?
        We provide a short discussion of this question in the concluding remarks (Section~\ref{section concluding remarks}).
        In the interim, we mention that this observation is true at least for $i=\gamma_P(G)$.
        \begin{observation}\label{observation marginal obs for gamma is minimal fort}
            Let $G$ be a graph.
            We have that
                \[ \mathrm{MObs}(G;\gamma_P(G))\geq \underline{f}(G). \]
        \end{observation}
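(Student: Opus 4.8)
The plan is to connect the marginal observance at $\gamma_P(G)$ directly to the existence of a fort. The key observation is that if $S$ is a set of size $\gamma_P(G)-1$ achieving the maximum observance, then $S$ cannot observe all of $G$ (since $\gamma_P(G)-1 < \gamma_P(G)$), so the unobserved set $X := V(G)\setminus \mathrm{Obs}(G;S)$ is nonempty. First I would argue that $X$ must itself be a fort. Indeed, the power domination process terminates precisely when no observed vertex has exactly one unobserved neighbor; thus at the end of the process starting from $S$, every vertex in $V(G)\setminus X = \mathrm{Obs}(G;S)$ has either zero or at least two neighbors in $X$. This is exactly the defining condition of a fort: there is no vertex outside $X$ with exactly one neighbor in $X$. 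Hence $X$ is a fort, and in particular $|X|\geq \underline{f}(G)$.

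The next step is to produce a set of size $\gamma_P(G)$ that observes all the previously unobserved vertices, thereby picking up at least $|X|\geq \underline{f}(G)$ new vertices. The natural candidate is to take any minimum power dominating set $S^*$ of size $\gamma_P(G)$, which observes all of $V(G)$. Then
\[
\mathrm{MObs}(G;\gamma_P(G)) = \mathrm{maxObs}(G;\gamma_P(G)) - \mathrm{maxObs}(G;\gamma_P(G)-1) = |V(G)| - |\mathrm{Obs}(G;S)|,
\]
and since $|V(G)| - |\mathrm{Obs}(G;S)| = |X| \geq \underline{f}(G)$, the bound follows immediately. The equality $\mathrm{maxObs}(G;\gamma_P(G)) = |V(G)|$ is just the definition of $\gamma_P(G)$, and $\mathrm{maxObs}(G;\gamma_P(G)-1) = |\mathrm{Obs}(G;S)|$ holds by our choice of $S$ as a maximizer of observance among sets of size $\gamma_P(G)-1$.

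The main (and really the only) subtle point is verifying that the unobserved set $X$ genuinely satisfies the fort definition given in the paper, namely that no vertex in $V(G)\setminus X$ has exactly one neighbor in $X$. This is where the termination condition of the zero forcing step does the work: if some observed vertex had exactly one unobserved neighbor, the process would not have terminated, contradicting that $\mathrm{Obs}(G;S)$ is the final observed set. I expect no real obstacle here, since the argument is essentially a restatement of the stopping rule of the power domination process; the entire observation is a clean consequence of the definition of a fort together with the fact that the observed set is, by construction, closed under forcing.
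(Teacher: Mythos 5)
Your proposal is correct and follows the same route as the paper's own proof: the unobserved set of a non-power-dominating set is a fort (by the termination condition of the forcing process), so any maximizer of size $\gamma_P(G)-1$ leaves at least $\underline{f}(G)$ vertices unobserved, giving $\mathrm{maxObs}(G;\gamma_P(G)-1)\leq n-\underline{f}(G)$ and hence the bound. The only difference is that you spell out the (folklore) verification that the unobserved set satisfies the fort definition, which the paper states without proof.
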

        \begin{proof}
            For any set $S\subseteq V(G)$ that is not a power dominating set, the set $V(G)\setminus \mathrm{Obs}(G;S)$ is a fort, so $\mathrm{maxObs}(G;\gamma_P(G)-1)\leq n-\underline{f}(G)$.
            Thus,
                \[ \mathrm{MObs}(G;\gamma_P(G))=\mathrm{maxObs}(G;\gamma_P(G))-\mathrm{maxObs}(G;\gamma_P(G)-1)\geq \underline{f}(G). \]
        \end{proof}
    
    \subsection{Using \texorpdfstring{$\gamma_P(G)$}{the power domination number of} sensors}\label{subsection using gamma sensors}
        In this section, we provide the proof of Theorem~\ref{theorem when to use gamma sensors}, along with many results which may be of independent interest.
        
        \begin{proposition}\label{proposition f = 2 beta bound for gamma sensors}
            Let $G$ be a graph on $n$ vertices with $\underline{f}(G)=2$.
            If $\beta\geq 1/2$, then any minimum power dominating set is $\beta$-best.
        \end{proposition}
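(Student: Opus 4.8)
The plan is to show that when $\beta \geq 1/2$ the optimized cost function $\mathrm{C}(G;k,\beta)$ is non-increasing in $k$ across $\{0,1,\dots,\gamma_P(G)\}$, so that a full minimum power dominating set beats every smaller placement, and then to dispatch larger placements with a trivial lower bound.

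First I would invoke Lemma~\ref{lemma marginal observation with f=2}: since $\underline{f}(G)=2\geq 2$, we have $\mathrm{MObs}(G;i)\geq 2$ for every $i\in[\gamma_P(G)]$, whence $\frac{1}{\mathrm{MObs}(G;i)}\leq \frac{1}{2}\leq \beta$ for all such $i$. Applying Proposition~\ref{observation marginal cost beta bound on when i-1 is better than i} in the form noted immediately after its proof, this gives $\mathrm{C}(G;i-1,\beta)\geq\mathrm{C}(G;i,\beta)$ for each $i\in[\gamma_P(G)]$. Telescoping these inequalities yields $\mathrm{C}(G;k,\beta)\geq \mathrm{C}(G;\gamma_P(G),\beta)$ for every $k$ with $0\leq k\leq \gamma_P(G)$.

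Next, let $S$ be any minimum power dominating set. Because $S$ observes all of $G$, we have $\mathrm{C}(G;S,\beta)=\gamma_P(G)=\mathrm{C}(G;\gamma_P(G),\beta)$. To verify $\beta$-bestness I would check an arbitrary $S'\subseteq V(G)$ in two regimes. If $|S'|\leq \gamma_P(G)$, then $\mathrm{C}(G;S',\beta)\geq \mathrm{C}(G;|S'|,\beta)\geq \mathrm{C}(G;\gamma_P(G),\beta)=\mathrm{C}(G;S,\beta)$, where the first inequality holds because $\mathrm{C}(G;k,\beta)$ uses the maximum observance at size $k$ (so it is a lower bound over all size-$k$ sets) and the second is the monotonicity just established. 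If instead $|S'|>\gamma_P(G)$, then simply $\mathrm{C}(G;S',\beta)\geq |S'|>\gamma_P(G)=\mathrm{C}(G;S,\beta)$, since the observance-penalty term $\beta(|V(G)|-|\mathrm{Obs}(G;S')|)$ is nonnegative. In both cases $\mathrm{C}(G;S,\beta)\leq \mathrm{C}(G;S',\beta)$, so $S$ is $\beta$-best.

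There is no serious obstacle here: the argument is a direct assembly of the marginal-observance bound for $\underline{f}(G)\geq 2$ with the marginal-cost criterion. The only point requiring a little care is covering \emph{all} placement sizes rather than just the size-optimal sets, which is handled by the inequality $\mathrm{C}(G;S',\beta)\geq \mathrm{C}(G;|S'|,\beta)$ together with the separate trivial bound for oversized placements.
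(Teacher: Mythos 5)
Your proof is correct, and it takes a mildly but genuinely different route from the paper's. The paper also starts from Lemma~\ref{lemma marginal observation with f=2}, but instead of chaining one-step cost inequalities it telescopes the marginal observances directly to get the explicit bound $\mathrm{maxObs}(G;\gamma_P(G)-k)\leq n-2k$ for $1\leq k\leq \gamma_P(G)-1$, and then compares $\mathrm{C}(G;S,\beta)$ against size-$(\gamma_P(G)-k)$ placements; because that computation excludes $k=\gamma_P(G)$, the paper must treat $S'=\emptyset$ as a separate case, where it needs the additional observation that $\underline{f}(G)=2$ forces $G$ to have no isolated vertices, hence $\gamma_P(G)\leq n/2$ and $\frac{\gamma_P(G)}{n}\leq \frac{1}{2}$. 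Your route through Proposition~\ref{observation marginal cost beta bound on when i-1 is better than i} packages each comparison as $\mathrm{C}(G;i-1,\beta)\geq \mathrm{C}(G;i,\beta)$ whenever $\beta\geq \frac{1}{\mathrm{MObs}(G;i)}$, and since this applies uniformly for all $i\in[\gamma_P(G)]$ (including $i=1$), the empty placement is swallowed by the same chain and no $\frac{\gamma_P(G)}{n}$ argument is needed. You also explicitly dispatch placements with $|S'|>\gamma_P(G)$, a case the paper leaves implicit. What the paper's version buys in exchange is the reusable quantitative bound on $\mathrm{maxObs}$, which is exactly the template it extends in Proposition~\ref{proposition f geq 3 beta bound for gamma sensors} for $\underline{f}(G)\geq 3$, where the analogous constant genuinely depends on $\gamma_P(G)$ and the separate empty-set case cannot be avoided; your cleaner chaining argument is specific to the fact that $\frac{1}{2}$ already dominates all the marginal thresholds.
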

        \begin{proof}
            Let $1\leq k\leq \gamma_P(G)-1$.
            Using~\eqref{equation max obs from marginal obs}, we can write 
                \[ \mathrm{maxObs}(G;\gamma_P(G)-k)=\sum_{i=1}^{\gamma_P(G)-k}\mathrm{MObs}(G;i)=\mathrm{maxObs}(G;\gamma_P(G))-\sum_{i=0}^{k-1}\mathrm{MObs}(G;\gamma_P(G)-i) \]
            Then, using the fact that $\mathrm{maxObs}(G;\gamma_P(G))=n$ and using Lemma~\ref{lemma marginal observation with f=2}, we find that
                \[ \mathrm{maxObs}(G;\gamma_P(G)-k)\leq n-2k. \]
            Thus, if $S$ is a minimum power dominating set and $S'\subseteq V(G)$ with $|S'|=\gamma_P(G)-k$, we have that
                \[ \mathrm{C}(G;S,\beta)-\mathrm{C}(G;S',\beta)=\gamma_P(G)-(\gamma_P(G)-k+\beta(n-|\mathrm{Obs}(G;S'))|)\leq k-\beta(2k). \]
            This is non-positive as long as $\beta\geq 1/2$.
            Similarly, 
                \[ \mathrm{C}(G;S,\beta)-\mathrm{C}(G;\emptyset,\beta)=\gamma_P(G)-\beta n. \]
            This is non-positive as long as $\beta\geq \frac{\gamma_P(G)}{n}$.
            Since $\underline{f}(G)=2$, $G$ does not contain any isolated vertices.
            In particular, $\gamma_P(G)\leq n/2$, and thus $\frac{\gamma_P(G)}{n}\leq \frac{1}{2}$.
            Thus, as long as $\beta\geq 1/2$, $S$ is $\beta$-best.
        \end{proof}
        \begin{proposition}\label{proposition f geq 3 beta bound for gamma sensors}
            Let $G$ be a graph on $n$ vertices with $\underline{f}(G)\geq 3$.
            If
                \[ \beta\geq\max\left\{\frac{1}{3}-\frac{\underline{f}(G)-3}{3(\underline{f}(G)+3\gamma_P(G)-6)},\frac{\gamma_P(G)}{n}\right\} \]
            then any minimum power dominating set is $\beta$-best.
        \end{proposition}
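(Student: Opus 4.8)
The plan is to follow the template of the proof of Proposition~\ref{proposition f = 2 beta bound for gamma sensors}, but replacing the marginal-observance bound of $2$ with the sharper bounds available when $\underline{f}(G)\geq 3$. Write $f:=\underline{f}(G)$ and $\gamma:=\gamma_P(G)$, and let $S$ be a minimum power dominating set, so that $\mathrm{C}(G;S,\beta)=\gamma$ since $S$ observes all of $G$. Any $S'$ with $|S'|\geq \gamma$ has $\mathrm{C}(G;S',\beta)\geq |S'|\geq \gamma=\mathrm{C}(G;S,\beta)$, so it suffices to show that $S$ beats every set of size strictly less than $\gamma$. I would split these into the empty set and the nonempty sets of size $\gamma-k$ with $1\leq k\leq \gamma-1$.

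For the interior comparisons, first I would combine Observation~\ref{observation marginal obs for gamma is minimal fort}, which gives $\mathrm{MObs}(G;\gamma)\geq f$, with Lemma~\ref{lemma marginal obs with f geq 3}, which gives $\mathrm{MObs}(G;i)\geq 3$ for every $i\in[\gamma]$. Telescoping as in~\eqref{equation max obs from marginal obs}, I get
\[
\mathrm{maxObs}(G;\gamma-k)=n-\sum_{i=0}^{k-1}\mathrm{MObs}(G;\gamma-i)\leq n-\bigl(f+3(k-1)\bigr).
\]
Hence for any $S'$ of size $\gamma-k$ we have $n-|\mathrm{Obs}(G;S')|\geq f+3k-3$, and a direct computation gives
\[
\mathrm{C}(G;S,\beta)-\mathrm{C}(G;S',\beta)=k-\beta\bigl(n-|\mathrm{Obs}(G;S')|\bigr)\leq k-\beta(f+3k-3),
\]
which is non-positive precisely when $\beta\geq \frac{k}{f+3k-3}$.

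It then remains to maximize the threshold $g(k):=\frac{k}{f+3k-3}$ over $1\leq k\leq \gamma-1$. Writing $\frac{1}{g(k)}=3+\frac{f-3}{k}$ shows that $g$ is non-decreasing in $k$ when $f\geq 3$, so its maximum occurs at $k=\gamma-1$, and a short algebraic simplification identifies $g(\gamma-1)=\frac{\gamma-1}{f+3\gamma-6}$ with the quantity $\frac13-\frac{f-3}{3(f+3\gamma-6)}=B$ appearing in the statement. Finally, for the empty set I would use the exact value $\mathrm{maxObs}(G;0)=0$: the comparison $\mathrm{C}(G;S,\beta)-\mathrm{C}(G;\emptyset,\beta)=\gamma-\beta n$ is non-positive exactly when $\beta\geq \gamma/n$. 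Taking $\beta\geq \max\{B,\gamma/n\}$ then makes $S$ at least as cheap as every other set, so $S$ is $\beta$-best.

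I expect the only real content---as opposed to routine arithmetic---to be twofold: recognizing that the crude bound $\mathrm{MObs}(G;i)\geq 3$ should be upgraded to $\mathrm{MObs}(G;\gamma)\geq f$ at the last step (this is exactly what makes $B$ sharper than the $f=3$ value $1/3$), and verifying the algebraic identity that the per-$k$ thresholds are maximized at $k=\gamma-1$ with value $B$. The empty-set term $\gamma/n$ must be kept separate from $B$ rather than folded in, because there we know the observed set exactly while for interior $k$ we only have the telescoped upper bound; a brief check that $n\geq f+3\gamma-3$ (again by telescoping) confirms the two bounds are mutually consistent. The degenerate case $\gamma=1$, where the interior range is empty, is handled by the empty-set comparison alone.
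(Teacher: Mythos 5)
Your proposal is correct and follows essentially the same route as the paper's own proof: the same telescoping of $\mathrm{MObs}$ bounds (using Lemma~\ref{lemma marginal obs with f geq 3} together with Observation~\ref{observation marginal obs for gamma is minimal fort}), the same per-$k$ threshold $\frac{k}{f+3k-3}$ maximized at $k=\gamma_P(G)-1$, and the same separate treatment of the empty set via $\gamma_P(G)/n$. Your explicit handling of sets of size at least $\gamma_P(G)$ and of the degenerate case $\gamma_P(G)=1$ are details the paper leaves implicit, but they do not change the argument.
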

        \begin{proof}
            Let $S$ be a minimum power dominating set and set $f:=\underline{f}(G)$.
            If $1\leq k<\gamma_P(G)$, we have from Lemma~\ref{lemma marginal obs with f geq 3} and Observation~\ref{observation marginal obs for gamma is minimal fort}, along with~\eqref{equation max obs from marginal obs}, we can write
            \begin{align*}
                \mathrm{maxObs}(G;\gamma_P(G)-k)=\sum_{i=1}^{\gamma_P(G)-k}\mathrm{MObs}(G;i)&=\mathrm{maxObs}(G;\gamma_P(G))-\sum_{i=0}^{k-1}\mathrm{MObs}(G;\gamma_P(G)-i)\\
                &\leq n-3(k-1)-f,
            \end{align*}
            so if $S'\subseteq V(G)$ is any set with $|S'|=\gamma_P(G)-k$, then
            \begin{align*}
                \mathrm{C}(G;S,\beta)-\mathrm{C}(G;S',\beta)=\gamma_P(G)-\big(\gamma_P(G)-k+\beta(n-|\mathrm{Obs}(G;S')|\big)&\leq k-\beta(3(k-1)+f),
            \end{align*}
            which is non-positive as long as 
                \[ \frac{1}{3}-\frac{f-3}{3(f+3k-3)}=\frac{k}{3(k-1)+f}\leq \beta \]
            If $k=\gamma_P(G)$, then we have $S'=\emptyset$, so
            \begin{align*}
                \mathrm{C}(G;S,\beta)-\mathrm{C}(G;S',\beta)&=\gamma_P(G)-\beta n,
            \end{align*}
            which is non-positive as long as $\beta\geq \frac{\gamma_P(G)}{n}$. Thus, $S$ is $\beta$-best for any $\beta$ satisfying
                \[ \beta\geq \max\left\{\max_{1\leq k<\gamma_P(G)}\frac{1}{3}-\frac{f-3}{3(f+3k-3)},\frac{\gamma_P(G)}{n}\right\}=\max\left\{\frac{1}{3}-\frac{f-3}{3(f+3\gamma_P(G)-6)},\frac{\gamma_P(G)}{n}\right\}. \]
        \end{proof}

        We can now prove Theorem~\ref{theorem when to use gamma sensors}.
        \begin{theorem}\label{theorem when to use gamma sensors}
            Let $G$ be a graph on $n$ vertices. 
            Let 
            \[
            B:=\begin{cases}
                1&\text{ if }\underline{f}(G)=1,\\
                \frac{1}{2}&\text{ if }\underline{f}(G)=2,\\
                \frac{1}{3}-\frac{\underline{f}(G)-3}{3(\underline{f}(G)+3\gamma_P(G)-6)}&\text{ if }\underline{f}(G)\geq 3.
            \end{cases}
            \]
            If $\displaystyle \beta\geq \max\left\{B,\frac{\gamma_P(G)}{n}\right\}$, then any minimum power dominating set is $\beta$-best.
            Furthermore, this is best-possible if $\underline{f}(G)\leq 3$.
        \end{theorem}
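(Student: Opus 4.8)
The plan is to establish the forward implication by assembling the three cases that have already been handled, and then to prove best-possibility by exhibiting, for each value $\underline{f}(G)\in\{1,2,3\}$, a graph on which the stated threshold is exactly the transition point.

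For the forward direction, first note that $\gamma_P(G)\leq n$, so $\gamma_P(G)/n\leq 1$ always. Hence when $\underline{f}(G)=1$ we have $B=1$ and the hypothesis forces $\beta\geq 1$, so the conclusion is immediate from Observation~\ref{observation beta small or large}. When $\underline{f}(G)=2$, the claim is exactly Proposition~\ref{proposition f = 2 beta bound for gamma sensors}, whose proof also records $\gamma_P(G)/n\leq 1/2$, so that $\max\{B,\gamma_P(G)/n\}=1/2$. When $\underline{f}(G)\geq 3$, the value $B$ coincides verbatim with the threshold in Proposition~\ref{proposition f geq 3 beta bound for gamma sensors}, which finishes this case. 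Thus the forward direction requires no new work beyond checking that the piecewise definition of $B$ matches the three prior statements.

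The substance is the best-possibility claim for $\underline{f}(G)\leq 3$, where $B=1/\underline{f}(G)$. The key tool is Proposition~\ref{observation marginal cost beta bound on when i-1 is better than i}: if a graph $G$ satisfies $\mathrm{MObs}(G;\gamma_P(G))=\underline{f}(G)=:k$, then for every $\beta<1/k$ we have $\mathrm{C}(G;\gamma_P(G)-1,\beta)<\mathrm{C}(G;\gamma_P(G),\beta)$, so a set of size $\gamma_P(G)-1$ strictly beats every minimum power dominating set and hence no minimum power dominating set is $\beta$-best. Consequently, to show that $B=1/k$ cannot be lowered it suffices to produce, for each $k\in\{1,2,3\}$, a graph $G_k$ with $\underline{f}(G_k)=k$, with $\mathrm{MObs}(G_k;\gamma_P(G_k))=k$, and with $\gamma_P(G_k)/n\leq 1/k$ so that $1/k$ is the active term in the maximum. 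I would take $G_1=K_1\sqcup K_2$ and $G_2=K_2\sqcup P_m$ for $m\geq 3$; in each, the disjoint ``easy'' component is fully observed by all but one sensor, while the remaining isolated- or twin-obstruction contributes a final marginal observance of exactly $k$, and one checks $\gamma_P(G_k)/n<1/k$ directly.

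The main obstacle is the case $k=3$, because the natural graphs with $\underline{f}=3$ (such as the path on four vertices, whose size-$3$ forts and absence of smaller forts follow from Lemma~\ref{lemma fort size 3 characterization}) have $\gamma_P=1$, which forces $\mathrm{MObs}(G;\gamma_P)=|V(G)|>3$. The fix is to raise the power domination number while keeping a residual fort of size exactly $3$: I would glue two copies of the path on vertices $a,b,c,d$ (each with size-$3$ fort $\{a,b,d\}$) by adding a single edge between their degree-$2$ interior vertices $c$ and $c'$. One verifies that this eight-vertex graph has no twins and no isolated vertices, hence $\underline{f}=3$ by Lemma~\ref{lemma fort size 3 characterization}; that a single optimally placed sensor (at $c$ or $c'$) observes exactly $n-3$ vertices, leaving a size-$3$ fort; and that $\gamma_P/n=1/4<1/3$. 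The crux is the cap $\mathrm{maxObs}(G;\gamma_P-1)=n-3$: any set of size $\gamma_P-1$ fails to be a power dominating set, so its unobserved complement is a fort (as in the proof of Observation~\ref{observation marginal obs for gamma is minimal fort}), and since $\underline{f}=3$ this fort has at least three vertices, giving $\mathrm{maxObs}(G;\gamma_P-1)\leq n-3$ with equality attained above; hence $\gamma_P=2$ and $\mathrm{MObs}(G;2)=3$. Applying Proposition~\ref{observation marginal cost beta bound on when i-1 is better than i} then shows the threshold $1/3$ is tight, completing the best-possibility argument.
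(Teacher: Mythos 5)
Your proof is correct, and the forward direction is exactly the paper's: both of you simply assemble Observation~\ref{observation beta small or large} (for $\underline{f}(G)=1$, using $\gamma_P(G)/n\leq 1$), Proposition~\ref{proposition f = 2 beta bound for gamma sensors}, and Proposition~\ref{proposition f geq 3 beta bound for gamma sensors}. Where you genuinely diverge is the best-possibility argument. You first prove a clean reduction via Proposition~\ref{observation marginal cost beta bound on when i-1 is better than i}: any graph with $\mathrm{MObs}(G;\gamma_P(G))=\underline{f}(G)=k$ and $\gamma_P(G)/n\leq 1/k$ witnesses tightness, since for $\beta<1/k$ a best set of size $\gamma_P(G)-1$ strictly beats $\gamma_P(G)$ sensors; you then exhibit $K_1\sqcup K_2$, $K_2\sqcup P_m$, and the $8$-vertex graph obtained by joining two copies of $P_4$ at their interior vertices $c,c'$. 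I checked these: the $8$-vertex graph has no twins or isolated vertices (so $\underline{f}=3$ by Lemma~\ref{lemma fort size 3 characterization}), a sensor at $c$ observes exactly $\{a,b,c,d,c'\}$ (five vertices), $\gamma_P=2$ via $\{c,c'\}$, and the fort argument caps $\mathrm{maxObs}(G;1)$ at $n-3$, so $\mathrm{MObs}(G;2)=3$ as you claim. The paper instead argues each case directly with explicit cost computations: for $\underline{f}=1$ it shows sharpness for \emph{every} such graph (a mild strengthening over your single example); for $\underline{f}=2$ it uses $\frac{n}{2}K_2$, where the cost $k+\beta(n-2k)$ is minimized at $k=0$ for $\beta<1/2$; and for $\underline{f}=3$ it builds a family from $\frac{n}{6}C_6$ plus a clique on $n/2$ vertices, with $\gamma_P=n/6$, comparing a single sensor against $\gamma_P$ sensors. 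Your route is more economical and uniform—one marginal-cost lemma plus three small, easily verified examples—while the paper's constructions buy slightly more: universal sharpness in the $\underline{f}=1$ case, and tight families with arbitrarily large power domination number in the other cases, showing the thresholds are not artifacts of small or nearly-trivial graphs. Either argument fully establishes the theorem as stated.
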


        \begin{proof}
            Observation~\ref{observation beta small or large} implies the result for $\underline{f}(G) = 1$ as $\frac{\gamma_P(G)}{n} \leq 1$.                      
            Proposition~\ref{proposition f = 2 beta bound for gamma sensors} and Proposition~\ref{proposition f geq 3 beta bound for gamma sensors} give the result for $\underline{f}(G)=2$ and $\underline{f}(G)\geq 3$ respectively.
            We will now show these bounds are best possible. 
            
            \textbf{Case 1:} $\underline{f}(G)=1$.
            The bound is sharp for every such graph.
            Indeed, if $\underline{f}(G)=1$, then $G$ contains an isolated vertex, and it is straightforward to compute that $\mathrm{C}(G;\gamma_P(G)-1,\beta)=\gamma_P(G)-1+\beta<\gamma_P(G)=\mathrm{C}(G;\gamma_P(G),\beta)$ when $\beta<1$.
            
            \textbf{Case 2:} $\underline{f}(G)=2$.
            Let $n$ be even and consider $\frac{n}{2}K_2$.
            We can compute $\mathrm{C}(\frac{n}{2}K_2;k,\beta)=k+\beta(n-2k)=\beta n+k(1-2\beta)$, which for $\beta<1/2$ is minimized at $k=0$ (rather than at $k=\gamma_P(G)$).
            
            \textbf{Case 3:} $\underline{f}(G)\geq 3$.
            Let $6\mid n$, and consider the graph $G$ obtained from $\frac{n}{6}C_6$ by choosing three independent vertices from each copy of $C_6$ to form a set $K$ of $n/2$ vertices, and then adding edges to make $K$ a clique.
            Let $X_1,X_2,\dots,X_{n/6}$ denote the sets of size $3$ such that $X_i$ is disjoint from $K$, and the three vertices in each $X_i$ were all in the same initial $C_6$.
            It is straightforward to see that $G$ does not contain any isolated vertices or twins, so by Lemma~\ref{lemma fort size 3 characterization}, $\underline{f}(G)\geq 3$, and indeed, $\underline{f}(G)=3$ as $X_i$ is a fort of size $3$ for each $i$.
            
            Since each $X_i$ is a fort, and the sets $\mathfrak{t}(X_i)$ are pairwise-disjoint, we have that $\gamma_P(G)\geq n/6$, and indeed, it is straightforward to see that if we choose $n/6$ vertices from $K$, one in each entrance of a set $X_i$, this is a power dominating set, so $\gamma_P(G)=n/6$.
            We also note that if $v\in K$ is any vertex, then $|\mathrm{Obs}(G,v)|=n/2+3$. With this, if $\beta<1/3$, we can compute
                \[ \mathrm{C}(G;\{v\},\beta)=1+\beta(n/2-3)<1+n/6-1=\gamma_P(G)=\mathrm{C}(G;\gamma_P(G),\beta). \qedhere \]
        \end{proof}

        \begin{figure}[htbp]
            \centering
            \resizebox{!}{.5\textwidth}{
                \begin{tikzpicture}
                    \node [style=Red Node] (0) at (-1.5, 2) {$6$};
                    \node [style=Black Node] (1) at (-1.5, 4) {$7$};
                    \node [style=Red Node] (2) at (0, 2) {$8$};
                    \node [style=Black Node] (3) at (0, 4) {$11$};
                    \node [style=Red Node] (4) at (1.5, 2) {$10$};
                    \node [style=Black Node] (5) at (1.5, 4) {$9$};
                    \node [style=Red Node] (6) at (-2.75, -0.5) {$0$};
                    \node [style=Black Node] (7) at (-4.5, -2) {$1$};
                    \node [style=Red Node] (8) at (-2, -1.5) {$2$};
                    \node [style=Black Node] (9) at (-3.5, -3) {$5$};
                    \node [style=Red Node] (10) at (-1.25, -2.5) {$4$};
                    \node [style=Black Node] (11) at (-2.5, -4) {$3$};
                    \node [style=Red Node] (12) at (2.75, -0.5) {$12$};
                    \node [style=Black Node] (13) at (4.5, -2) {$13$};
                    \node [style=Red Node] (14) at (2, -1.5) {$14$};
                    \node [style=Black Node] (15) at (3.5, -3) {$17$};
                    \node [style=Red Node] (16) at (1.25, -2.5) {$16$};
                    \node [style=Black Node] (17) at (2.5, -4) {$15$};
                    \draw [style=Black Alpha Edge] (6) to (7);
                    \draw [style=Black Alpha Edge] (7) to (8);
                    \draw [style=Black Alpha Edge] (8) to (11);
                    \draw [style=Black Alpha Edge] (11) to (10);
                    \draw [style=Black Alpha Edge] (10) to (9);
                    \draw [style=Black Alpha Edge] (9) to (6);
                    \draw [style=Black Alpha Edge] (0) to (1);
                    \draw [style=Black Alpha Edge] (1) to (2);
                    \draw [style=Black Alpha Edge] (2) to (5);
                    \draw [style=Black Alpha Edge] (5) to (4);
                    \draw [style=Black Alpha Edge] (4) to (3);
                    \draw [style=Black Alpha Edge] (3) to (0);
                    \draw [style=Black Alpha Edge] (16) to (17);
                    \draw [style=Black Alpha Edge] (17) to (14);
                    \draw [style=Black Alpha Edge] (14) to (13);
                    \draw [style=Black Alpha Edge] (13) to (12);
                    \draw [style=Black Alpha Edge] (12) to (15);
                    \draw [style=Black Alpha Edge] (15) to (16);
                    \draw [style=Black Alpha Edge] (0) to (6);
                    \draw [style=Black Alpha Edge] (6) to (2);
                    \draw [style=Black Alpha Edge] (2) to (8);
                    \draw [style=Black Alpha Edge] (8) to (4);
                    \draw [style=Black Alpha Edge] (4) to (10);
                    \draw [style=Black Alpha Edge] (10) to (0);
                    \draw [style=Black Alpha Edge] (0) to (8);
                    \draw [style=Black Alpha Edge] (8) to (16);
                    \draw [style=Black Alpha Edge] (16) to (0);
                    \draw [style=Black Alpha Edge] (0) to (14);
                    \draw [style=Black Alpha Edge] (14) to (2);
                    \draw [style=Black Alpha Edge] (2) to (12);
                    \draw [style=Black Alpha Edge] (12) to (4);
                    \draw [style=Black Alpha Edge] (4) to (14);
                    \draw [style=Black Alpha Edge] (14) to (16);
                    \draw [style=Black Alpha Edge] (16) to (4);
                    \draw [style=Black Alpha Edge] (4) to (6);
                    \draw [style=Black Alpha Edge] (6) to (16);
                    \draw [style=Black Alpha Edge] (16) to (10);
                    \draw [style=Black Alpha Edge] (10) to (14);
                    \draw [style=Black Alpha Edge] (14) to (8);
                    \draw [style=Black Alpha Edge] (8) to (12);
                    \draw [style=Black Alpha Edge] (12) to (6);
                    \draw [style=Black Alpha Edge] (6) to (8);
                    \draw [style=Black Alpha Edge] (8) to (10);
                    \draw [style=Black Alpha Edge] (10) to (12);
                    \draw [style=Black Alpha Edge] (12) to (14);
                    \draw [style=Black Alpha Edge] (10) to (2);
                    \draw [style=Black Alpha Edge] (16) to (2);
                    \draw [style=Black Alpha Edge] (14) to (6);
                    \draw [style=Black Alpha Edge] (12) to (0);
                    \draw [style=Black Alpha Edge] (0) to (2);
                    \draw [style=Black Alpha Edge] (2) to (4);
                    \draw [style=Black Alpha Edge, bend left=333] (10) to (6);
                    \draw [style=Black Alpha Edge, bend left=333] (12) to (16);
                    \draw [style=Black Alpha Edge, bend right=335] (4) to (0);
                    
                    \node [style=Text Node] (17) at (-5.375, -1.25) {$X_1$};
                    \begin{scope}[rotate around={-45:(-3.5, -3)}]
                        \draw [rounded corners=10pt, style=Dashed Edge] (-6.5,-3.5) rectangle (-1.5, -2.5);
                    \end{scope}
                    
                    \node [style=Text Node] (17) at (-2.625, 4) {$X_2$};
                    \draw [rounded corners=10pt, style=Dashed Edge] (-3, 3.5) rectangle (2, 4.5);
                    
                    \node [style=Text Node] (17) at (5,-1.25) {$X_3$};
                    \begin{scope}[rotate around={45:(3.5, -3)}]
                        \draw [rounded corners=10pt, style=Dashed Edge] (1.5, -3.5) rectangle (6.5, -2.5);
                    \end{scope}
                \end{tikzpicture}
            }
            \caption{The graph $G$ as described in \textbf{Case 3} with $K = \{0,2,4,6 , 8, 10, 12, 14, 16\}$ indicated in red, and $X_1 = \{1,3,5\}$, $X_2 = \{7,9,11\}$, and $X_3 = \{13,15,17\}$ identified by the dashed curves.}
        \end{figure}
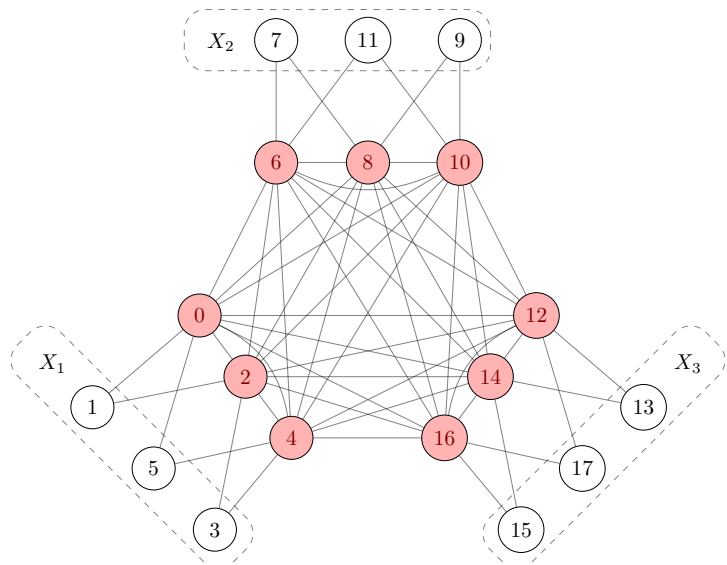

        We conclude this section with a result which may be helpful for finding a minimum power dominating set.
        Power domination can be phrased as a cover problem; given a graph $G$, a set $S$ is a power dominating set if and only if $S$ intersects $\mathfrak{t}(F)$ for all forts $F\subseteq V(G)$~\cite{SH2020}.
        Of course, one can strengthen this to include only minimal forts in $G$.
        A common heuristic is that that placing a sensor in the entrance of a fort, rather than in the fort itself, is often better.
        The following theorem gives some evidence for this heuristic; we can always find a power dominating set which is contained inside the union of the entrances of the minimal forts of $G$.
        Or, in the converse, we can exclude any vertices which are not in the entrance of a minimal fort from our search, and we are guaranteed to still be able to find a minimum power dominating set among the non-excluded vertices.

        \begin{theorem}
            For any connected graph $G$ with at least three vertices, there exists a minimum power-dominating set $S$ along with a collection of minimal forts $F_1,F_2,\dots, F_k$ such that $S$ is contained in the union of the entrances of the $F_i$'s.
        \end{theorem}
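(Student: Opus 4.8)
The plan is to combine the cover characterization of power domination with an extremal choice of minimum power dominating set and a local exchange argument. Write $E:=\bigcup_F \mathfrak{e}(F)$ for the union of the entrances of all minimal forts of $G$, so that the goal is to produce a minimum power dominating set contained in $E$. Recall the cited characterization, strengthened to minimal forts: a set $T$ is a power dominating set if and only if $T\cap \mathfrak{t}(F)\neq\emptyset$ for every minimal fort $F$; equivalently, $T$ fails to be a power dominating set exactly when some minimal fort $F$ satisfies $T\cap\mathfrak{t}(F)=\emptyset$.

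First I would choose, among all minimum power dominating sets of $G$, a set $S$ maximizing $|S\cap E|$, and aim to show $S\subseteq E$. Suppose not, and fix $v\in S\setminus E$. Since $S$ is minimum, $S\setminus\{v\}$ is not a power dominating set, so by the characterization there is a minimal fort $F$ with $(S\setminus\{v\})\cap\mathfrak{t}(F)=\emptyset$; as $S$ itself hits $\mathfrak{t}(F)$, we get $S\cap\mathfrak{t}(F)=\{v\}$, and in particular $v\in\mathfrak{t}(F)=F\cup\mathfrak{e}(F)$. Because $v\notin E$ we cannot have $v\in\mathfrak{e}(F)$, so $v$ lies in $F$ (the fort interior), with no other sensor of $S$ in $F\cup\mathfrak{e}(F)$.

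The heart of the argument is a replacement step: I would produce a vertex $u\in E\setminus S$ such that $(S\setminus\{v\})\cup\{u\}$ is again a power dominating set. Then $(S\setminus\{v\})\cup\{u\}$ is a minimum power dominating set (it has size $\gamma_P(G)$, since being a power dominating set forbids it from being smaller than $\gamma_P(G)$) with strictly more vertices in $E$, contradicting the choice of $S$ and completing the proof. To pin down what $u$ must satisfy, let $\mathcal{F}_v$ be the family of minimal forts $F'$ with $S\cap\mathfrak{t}(F')=\{v\}$, which are exactly the minimal forts that $S\setminus\{v\}$ misses; this family is nonempty since $F\in\mathcal{F}_v$. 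Then $(S\setminus\{v\})\cup\{u\}$ is a power dominating set if and only if $u\in\bigcap_{F'\in\mathcal{F}_v}\mathfrak{t}(F')$, since every minimal fort outside $\mathcal{F}_v$ is already hit by $S\setminus\{v\}$. Note that $v$ itself lies in this intersection (indeed in each $F'\in\mathcal{F}_v$, as $v\notin E$ forces $v$ into the interior of every such $F'$), so the intersection is nonempty; the remaining task is to find such a $u$ that additionally lies in some entrance.

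The main obstacle is precisely this last point: transferring the ``responsibility'' carried by the interior vertex $v$ to a vertex lying in an entrance, i.e. showing $\big(\bigcap_{F'\in\mathcal{F}_v}\mathfrak{t}(F')\big)\cap E\neq\emptyset$. I expect to handle it by exploiting connectivity: starting from $v$ in the interior of the minimal fort $F$, one moves toward $\mathfrak{e}(F)$ (which is nonempty because $G$ is connected and, using $n\ge 3$, one checks $F\neq V(G)$ so that $E\neq\emptyset$), and argues that a suitably chosen boundary vertex remains in $\mathfrak{t}(F')$ for every $F'\in\mathcal{F}_v$; the delicate part is controlling simultaneous membership across all forts in $\mathcal{F}_v$ rather than a single one. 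A cleaner route may be to first reduce $\mathcal{F}_v$ to a single minimal fort by refining the extremal choice of $S$ — for instance, by secondarily minimizing $\sum_{v\in S\setminus E}|F_v|$, where $F_v:=V(G)\setminus\mathrm{Obs}(G;S\setminus\{v\})$ — after which a single entrance vertex of that fort should serve as the replacement $u$.
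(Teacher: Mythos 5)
Your setup matches the paper's first extremal criterion and is sound as far as it goes: choosing a minimum power dominating set $S$ maximizing $|S\cap E|$, reducing the problem to finding $u\in E\cap\bigcap_{F'\in\mathcal{F}_v}\mathfrak{t}(F')$ (where $\mathcal{F}_v$ is the family of minimal forts whose $\mathfrak{t}$-sets meet $S$ only in $v$), and observing that $v$ lies in the interior of every $F'\in\mathcal{F}_v$ are all correct steps. But the proof stops exactly where the real work is: you never produce such a $u$, and both routes you sketch are unsubstantiated. Walking from $v$ toward $\mathfrak{e}(F)$ for a single fort $F$ does not give a vertex lying in $\mathfrak{t}(F')$ for all $F'\in\mathcal{F}_v$ simultaneously, which you yourself flag as the delicate point; and the suggested refinement (secondarily minimizing $\sum_{v\in S\setminus E}|F_v|$) is never shown to reduce $\mathcal{F}_v$ to a single minimal fort, nor is there an evident reason it would. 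So this is a genuine gap, not a presentational one.

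The missing idea is small but decisive, and it is how the paper closes the loop. Since $v\in F'$ (interior) for every $F'\in\mathcal{F}_v$, \emph{every} neighbor $y$ of $v$ satisfies $y\in F'\cup\mathfrak{e}(F')=\mathfrak{t}(F')$ for every $F'\in\mathcal{F}_v$; that is, all of $N[v]$, not just $v$, lies in $\bigcap_{F'\in\mathcal{F}_v}\mathfrak{t}(F')$. Hence one need not jump into $E$ in one step: swapping $v$ for any neighbor $y$ yields another minimum power dominating set, and one checks that the family of minimal forts missed by the remaining sensors is unchanged by the swap. The paper therefore adds a second extremal criterion: among sets optimal for your first criterion, choose $S$ and $w\in S\setminus E$ minimizing the distance from $w$ to $A(w,S):=\bigcup_{F'\in\mathcal{F}_w}\mathfrak{e}(F')$, which is nonempty because minimal forts of a connected graph on at least three vertices are proper subsets and hence have nonempty entrances. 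Swapping $w$ for the neighbor $y$ one step closer to $A(w,S)$ then either lands $y$ in some entrance, contradicting the first criterion, or keeps $A$ unchanged while strictly decreasing the distance, contradicting the second. Replacing your one-shot search for $u$ by this neighbor-swap together with the distance-based secondary minimization would complete your argument.
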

        \begin{proof}
            Let $\mathfrak{F}$ denote the collection of all minimal forts of $G$.
            Given a vertex $x\in V(G)$, let $\mathfrak{C}(x)\subseteq\mathfrak{F}$ denote the set of minimal forts $F$ such that $x\in \mathfrak{t}(F)$.
            Finally, given a set $S\subseteq V(G)$ and a vertex $x\in V(G)\setminus S$, let $\mathfrak{R}(x,S)\subseteq \mathfrak{F}$ be the set of minimal forts $F$ such that $x\in\mathfrak{t}(F)$, but $S\cap\mathfrak{t}(F)=\emptyset$.
            Given a vertex $x$ and a set $T\subseteq V(G)$, let $d(x,T)$ denote the distance from $x$ to $T$.
        
            Assume to the contrary that the statement is not true, and let $G$ be a graph without this property.
            Out of all minimum power-dominating sets of $G$, let $S$ be one with the following properties:
            \begin{enumerate}
                \item\label{property1}
                    The number of vertices in $S$ which are not in the entrance of any minimal fort is minimized.
                \item\label{property2}
                    Subject to Property~\ref{property1}, $S$ contains a vertex $w$ which is not in the entrance of any minimal fort, but the distance from $w$ to $\displaystyle A(w,S):=\bigcup_{F\in \mathfrak{R}(w,S\setminus \{w\})}\mathfrak{e}(F)$ is minimized. 
            \end{enumerate}
        
            Let $y\in N[w]$ be a vertex such that $d(y,A(w,S))=d(w,A(w,S))-1$.
            We claim that $\mathfrak{R}(w,S\setminus \{w\})\subseteq \mathfrak{C}(y)$.
            Indeed, for any minimal fort $F\in \mathfrak{R}(w,S\setminus \{w\})$, we must have $w\in F$ since we do not have $w\in \mathfrak{e}(F)$, and so $y$ is adjacent to a vertex in $F$, giving us that $y\in \mathfrak{t}(F)$.
        
            With this in hand, we claim that $S'=(S\setminus \{w\})\cup \{y\}$ is a power dominating set and that $A(y,S')=A(w,S)$.
            Indeed, any fort $F\in \mathfrak{F}\setminus\mathfrak{R}(w,S\setminus \{w\})$ has a vertex from $S\setminus \{w\}$ in $\mathfrak{t}(F)$, while all the forts $F\in\mathfrak{R}(w,S\setminus \{w\})$ have $y$ in $\mathfrak{t}(F)$.
            Thus, $S'$ is a power dominating set, and further is minimal since $|S'|=|S|=\gamma_P(G)$.
            The fact that $A(y,S')=A(w,S)$ follows from the fact that 
                \[ \mathfrak{R}(y,S'\setminus \{y\})=\mathfrak{F}\setminus \left(\bigcup_{a\in S'\setminus \{y\}}\mathfrak{C}(a)\right)=\mathfrak{F}\setminus \left(\bigcup_{a\in S\setminus \{w\}}\mathfrak{C}(a)\right)=\mathfrak{R}(w,S\setminus \{w\}). \]
            This implies that $d(y,A(y,S'))=d(y,A(w,S))=d(w,A(w,S))-1$. 
            If $y$ is in the entrance of any minimal fort, we have a contradiction of Property~\ref{property1}. 
            Otherwise, the number of vertices in $S'$ which are not in the entrance of any minimal fort is the same as in $S$, but also $d(y,A(u,S')) < d(w,A(w,S))$. 
            Therefore, $S'$ and $y$ contradict the minimality of the choice of $S$ and $w$ in Property~\ref{property2}.
        \end{proof}

\section{Concluding remarks}\label{section concluding remarks}
    There are still many open theoretical questions involving the cost function and maximum observance function.
    
    The first involves an extension of Lemmas~\ref{lemma marginal observation with f=2} and~\ref{lemma marginal obs with f geq 3}.
    Both statements show that if we have a bound on the minimum fort size of a graph, we also have a bound on the marginal observances of that graph.
    It is possible that a similar phenomenon happens with larger minimum fort sizes. We ask the following question.
    \begin{conjecture}
        Let $G$ be a graph. If $\underline{f}(G)\geq 4$, then
    	   $ \mathrm{MObs}(G;i)\geq 4 $
        for all $i\in [\gamma_P(G)]$.
    \end{conjecture}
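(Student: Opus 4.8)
The plan is to mimic the proofs of Lemma~\ref{lemma marginal observation with f=2} and Lemma~\ref{lemma marginal obs with f geq 3}, pushing the local case analysis one level further. Fix $i\in[\gamma_P(G)]$ and choose $S\in\argmax_{S'\in\binom{V(G)}{i-1}}|\mathrm{Obs}(G;S')|$, and set $X:=V(G)\setminus\mathrm{Obs}(G;S)$. Since $S$ is not a power dominating set, $X$ is a nonempty fort, and because $\underline{f}(G)\ge 4$ we have $|X|\ge 4$ and, moreover, $G$ contains no fort of size $1$, $2$, or $3$. The goal is to exhibit a single vertex $w$ with $|\mathrm{Obs}(G;S\cup\{w\})|\ge|\mathrm{Obs}(G;S)|+4$; since $\mathrm{maxObs}(G;i)\ge|\mathrm{Obs}(G;S\cup\{w\})|$, this yields $\mathrm{MObs}(G;i)\ge 4$. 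As a first reduction, if some $v\in V(G)$ satisfies $|N[v]\cap X|\ge 4$, then $w:=v$ already observes four new vertices in the domination step (all newly observed vertices necessarily lie in $X$), so we may assume $|N[v]\cap X|\le 3$ for every $v$. If $S\cup\{w\}$ ever turns out to be power dominating we are also done immediately, since then all $|X|\ge 4$ vertices of $X$ become observed, so we may assume this does not happen for the $w$ we consider.

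Under these assumptions I would first record the structural consequences of forbidding small forts, in the spirit of Lemma~\ref{lemma fort size 3 characterization}: the absence of forts of size $1$ and $2$ again rules out isolated vertices and twins, while the absence of forts of size $3$ means that every three-element subset $T\subseteq X$ admits an external vertex with exactly one neighbor in $T$ -- the seed of a forcing step. Starting from an entrance vertex $z\in\mathfrak{e}(X)$ (which, $X$ being a fort and $|N[z]\cap X|\le 3$, has exactly two or three neighbors in $X$), the idea is to place $w$ near $z$ so that the domination step observes two or three vertices of $X$ and then the forbidden-fort property forces enough further vertices to reach a total of four. When $z$ has three neighbors in $X$, the domination step already supplies three, and one needs only a single extra force, which the non-twin and non-size-$3$-fort conditions should provide; when every entrance vertex has exactly two neighbors in $X$, one instead chains the argument of Lemma~\ref{lemma marginal obs with f geq 3}, using a distinguishing neighbor to force a third vertex and then the size-$3$ fort obstruction to force a fourth.

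The main obstacle is the combinatorial explosion of this case analysis. Unlike the $\underline{f}(G)\ge 3$ proof, where a single distinguishing neighbor sufficed, guaranteeing a \emph{fourth} newly observed vertex requires controlling how the induced forcing cascades interact, and the relevant local configurations -- two versus three neighbors of $z$ in $X$, distinguishing vertices lying inside versus outside $X$, and the various ways a forbidden size-$3$ fort can be witnessed -- proliferate. The genuine worry is that every single added sensor might shrink the unobserved fort by only two or three vertices even though no small fort exists globally; in fort-shrinking terms, writing $X':=V(G)\setminus\mathrm{Obs}(G;S\cup\{w\})$ (again a fort of size $\ge 4$ in the nontrivial case), one must rule out that $|X\setminus X'|\le 3$ for every choice of $w$. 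I expect that closing this gap cleanly will require either a sharpened structural description of the ways a fort can fail to shrink by four under a single sensor, or a more global extremal/counting argument replacing the purely local casework; this is presumably why the statement is posed as a conjecture rather than proved.
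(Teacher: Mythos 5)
There is nothing in the paper to check your proposal against: the statement you were asked about is posed as a \emph{conjecture} in Section~\ref{section concluding remarks}, and the paper offers no proof of it. The authors only prove the weaker analogues for minimum fort size $2$ and $3$ (Lemmas~\ref{lemma marginal observation with f=2} and~\ref{lemma marginal obs with f geq 3}), and they even flag that the naive generalization $\mathrm{MObs}(G;i)\geq \underline{f}(G)$ is false (the grid $P_n\square P_n$ has $\underline{f}=n$ but $\mathrm{MObs}(G;1)=6$), so any correct argument must exploit the constant $4$ specifically rather than a generic fort-size induction. Your diagnosis that the statement remains open is accurate.

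That said, as a proof attempt your proposal has a genuine gap, and you name it yourself: nothing in your outline rules out that \emph{every} choice of $w$ shrinks the unobserved fort by only two or three vertices. The specific step that fails is the appeal to the ``forbidden size-$3$ fort'' property to generate a fourth observed vertex. The absence of a fort of size $3$ guarantees that any $3$-subset $T$ of the still-unobserved set has an external vertex with exactly one neighbor in $T$, but such a vertex can perform a zero forcing step only if \emph{all} of its other neighbors are already observed, and you have no control over that: the witness vertex may itself lie in $X$, or may be an observed vertex with a second unobserved neighbor outside $T$. In the $\underline{f}(G)\geq 3$ proof this difficulty is avoided because the single extra force needed is anchored at a vertex ($z$ or $y$) whose entire closed neighborhood intersects $X$ in exactly the two or three vertices just observed; once you need a \emph{fourth} vertex, the force must propagate one step further from the sensor, and the local configuration around that second force is no longer pinned down by the hypotheses $|N[v]\cap X|\leq 3$ and ``no twins, no $3$-forts.'' Closing the conjecture would require either a structural classification of configurations where all forcing cascades stall after three vertices (and a proof that such configurations create a fort of size at most $3$), or an entirely different, non-local argument; your outline as written does not supply either.
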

    
    If this is true, one might ask if the seemingly natural bound $\mathrm{MObs}(G;i)\geq \underline{f}(G)$ for all $i\in [\gamma_P(G)]$ holds.
    However, this is not true for all graphs.
    Consider the square grid graph $P_n\square P_n$ with $n>6$ which has $\underline{f}(P_n\square P_n)=n$ by \cite[Theorem 9]{LLC2025}.
    A single sensor can only observe at most $6$ vertices, giving us $\mathrm{MObs}(P_n\square P_n,1)=6< n= \underline{f}(P_n\square P_n)$.
    
    The next question we would like to see answered involves computational complexity.
    To determine the entire cost function for a given graph $G$, one first needs to determine the power domination number $G$.
    It is NP-complete to determine the power domination number of a graph~\cite{HHHH2002}, so determining the cost function for a general graph is likely computationally difficult.
    However, determining the power domination number for trees can be done in linear time~\cite{HHHH2002}.
    The known polynomial-time algorithms for finding a minimum power dominating set in a tree $T$ strongly rely on being able to make certain locally optimal decisions that also turn out to be globally optimal.
    For example, if you have a vertex adjacent to two leaves in your graph, you can safely assume that this vertex appears in a minimum power dominating set.
    This property is not necessarily true when calculating a $\beta$-best set, or in general when calculating $\mathrm{maxObs}(T;k)$.
    As such, we ask the following:
    \begin{question}
        Let $T$ be a tree.
        Is it $NP$-complete to determine $\mathrm{maxObs}(T;k)$ for $k < \gamma_P(G)$? 
    \end{question}

\section{Acknowledgements}
    This project was sponsored, in part, by the Air Force Research Laboratory via the Autonomy Technology Research Center, University of Dayton, and Wright State University.
    This research was also supported by Air Force Office of Scientific Research labtask award 23RYCOR004.

\bibliographystyle{plain}
\bibliography{bib}

\end{document}